\documentclass[10pt]{amsart}
\usepackage{amscd}

\usepackage{amssymb}

\newcommand{\bP}{{\mathbb P}}

\newcommand{\bZ}{{\mathbb Z}}

\newcommand{\bC}{{\mathbb C}}

\newcommand{\bQ}{{\mathbb Q}}

\newcommand{\la}{{\langle}}
\newcommand{\ra}{{\rangle}}

\newtheorem{thm}{Theorem}[section]
\newtheorem{lemma}[thm]{Lemma}
\newtheorem{cor}[thm]{Corollary}
\newtheorem{prop}[thm]{Proposition}

\newtheorem{que}[thm]{Question}

\setlength{\textwidth}{123mm}
\numberwithin{equation}{section}

\begin{document}

\title[]{Coniveau filtrations and  Milnor operation $Q_n$}
 
 \author{Nobuaki Yagita}

\address{[N. Yagita]
Department of Mathematics, Faculty of Education, Ibaraki University,
Mito, Ibaraki, Japan}

\email{nobuaki.yagita.math@vc.ibaraki.ac.jp}
\keywords{coneveau filtration, Milnor operation, classifying spaces}
\subjclass[2010]{Primary 20G10, 55R35, 14C15 ; Seconary 
  57T25}

\begin{abstract}
Let $BG$ be the classifying space of an algebraic group $G$
over the field $\bC$ of complex numbers.
We compute a new stable birational invariant defined by the difference of two coniveau filtrations  of a
 smooth projective approximation of $BG\times \bP^{\infty}$.
\end{abstract}

\maketitle

\section{Introduction}

Let $p$ be a prime number and $A=\bQ,\bZ$ or $\bZ/p^i$ for $i\ge 1$.
Let $X$ be a smooth algebraic variety over $k=\bC$.  
Let us recall the coniveau filtration
of the cohomology with coefficients in  $A$,
\[ N^cH^i(X;A)=\sum_{Z\subset X} ker(j^*:H^i(X;A)\to
H^i(X-Z,A))\]
where $Z\subset X$ runs through the closed subvarieties of codimension at least $c$ of $X$,
and $j:X-Z\subset X$ is the complementary open immersion.

Similarly, we can define the $strong$ coniveau filtration 
by 
\[ \tilde N^cH^i(X;A)=\sum_{f:Y\to X} im(f_*:H^{i-2r}(Y;A)\to
H^i(X,A))\]
where the sum is over all proper morphism $f: Y\to X$ from a smooth complex variety $Y$ of $dim(Y)=dim(X)-r$ with $r\ge c$,
and $f_*$ its transfer (Gysin map).
It is immediate that $\tilde N^cH^*(X;A)\subset 
N^cH^*(X;A)$.

It was hoped that when $X$ is proper, 
the strong coniveau filtration was  just the coniveau filtration,
i.e., 
$ \tilde N^cH^i(X;A)=N^cH^i(X;A)$.
In fact Deligne shows they are the same for $A=\bQ$.
However, Benoist and Ottem (\cite{Be-Ot}) recently show that they are not equal
for $A=\bZ$.

Let $G$ be an algebraic group such that $H^*(BG;\bZ)$ has
$p$-torsion for the classifying space $BG$
defined by Totaro \cite{To}.  
The purpose of this paper is to compute the $mod(p)$ stable birational
invariant of $X$ (Proposition 2.4 in \cite{Be-Ot})
\[ DH^*(X;A)=N^1H^*(X;A)/(p,\tilde N^1H^*(X;A)) \]
for  projective approximations $X$ for $BG$ (\cite{Ek},\cite{To},\cite{Pi-Ya}).
In fact, we see that $DH^*(X;\bZ)\not =0$ happen very frequently
in the above cases.

Here an approximation (for $degree \le N$) is 
the projective (smooth)  variety $X=X(N)$ such that
there is a map $g:X\to BG\times \bP^{\infty}$ with \[g^*:H^*(BG\times \bP^{\infty};A)\cong H^*(X;A) \quad 
for \ *<N. \]
 (In this paper, we say $X$ is an approximation $for$ $BG$ when it is that $of$ $BG\times \bP^{\infty} $
strictly speaking.)    Let us write $DH^*(X;\bZ)$ by $DH^*(X)$ simply as usual.

Here we give an example that we can compute a nonzero  $DH^*(X)$.  For  $G=(\bZ/p)^3$ the elementary abelian $p$-group of $rank=3$, we know  (for $p$ odd)
\[ H^*(BG;\bZ/p)\cong \bZ/p[y_1,y_2,y_3 ]\otimes \Lambda(x_1,
x_2,x_3),\quad |x_i|=1,\  Q_0(x_i)=y_i\]
\[H^*(BG)/p\cong \bZ/p[y_1,y_2,y_3](1,Q_0(x_ix_j),Q_0(x_1x_2x_3)|1\le i<j\le 3) \]
where $Q_0=\beta$ is the Bockstein operation, and the notation $R(a,...,b)$ (resp. $R\{a,..., b\}$) means the $R$-submodule (resp. the free $R$-module) generated by $a,...,b$.
\begin{thm}  Let $G=(\bZ/p)^3$.  Given $N>2p+3$, there are  approximations
$X=X(N)$ for  $BG$ such that  we have for  $\alpha_{ij}=Q_0(x_ix_j),$   $\alpha=Q_0(x_1x_2x_3)$
\[ DH^*(X)\cong DH^3(X)\oplus DH^4(X)\]
\[\cong \bZ/p\{\alpha_{ij},\alpha|1\le i<j\le 3\}
\quad when\  *<N
.\]
But we have  $DH^*(X;\bZ/p)=0$ when $*<N$.
\end{thm}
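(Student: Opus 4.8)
The plan is to analyze the two filtrations separately in the range $*<N$, where $g^*$ identifies $H^*(X;\bZ)$ with $H^*(BG\times\bP^{\infty};\bZ)$, and to isolate the torsion classes $\alpha_{ij},\alpha$ as the only contributions to the quotient. First I would show $\alpha_{ij},\alpha\in N^1H^*(X;\bZ)$. Let $D_i\subset X$ be the Chern divisor $\{y_i=0\}$, the zero locus of a section of the line bundle with first Chern class $y_i$, so that $y_i=[D_i]$ restricts to $0$ on $X\setminus D_i$. On the open set $U=X\setminus(D_i\cup D_j)$ the integral Bockstein of $x_i$ vanishes, since $Q_0(x_i)|_U=y_i|_U=0$; hence $x_i|_U$ and $x_j|_U$ lift to integral classes and $x_ix_j|_U$ is the reduction of an integral class. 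Because $Q_0$ commutes with the open restriction $j^*$, we get $\alpha_{ij}|_U=Q_0(x_ix_j)|_U=Q_0(x_ix_j|_U)=0$, so $\alpha_{ij}$ is supported on the codimension-one set $D_i\cup D_j$ and lies in $N^1$. The same argument on $U=X\setminus(D_1\cup D_2\cup D_3)$ gives $\alpha\in N^1$.

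Next I would prove $DH^*(X;\bZ/p)=0$, where, since multiplication by $p$ is zero, the group equals $N^1H^*(X;\bZ/p)/\tilde N^1H^*(X;\bZ/p)$. The key is the projection formula: writing $y_i=i_*(1)$ for the Gysin map $i\colon D_i\hookrightarrow X$ of the smooth divisor, one has $y_i\cdot z=i_*(i^*z)$ for any $z$. Thus every product $y_iz$, with $z$ a monomial in the $x$'s, is a Gysin pushforward $i_*(i^*z)$ from the smooth variety $D_i$ and lies in $\tilde N^1H^*(X;\bZ/p)$. Since mod $p$ one has $\alpha_{ij}=y_ix_j-x_iy_j$ and $\alpha=y_1x_2x_3-x_1y_2x_3+x_1x_2y_3$, both classes are strong-coniveau mod $p$. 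Running this over the generators of $N^1H^*(X;\bZ/p)$, all of which in the relevant range lie in the ideal generated by the $y_i$ together with the algebraic even classes, shows $N^1=\tilde N^1$ mod $p$, whence $DH^*(X;\bZ/p)=0$.

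The heart of the proof is to show that integrally $\alpha_{ij},\alpha\notin\tilde N^1H^*(X;\bZ)$, i.e.\ that they are nonzero in $N^1/(p,\tilde N^1)$. Here the decomposition used mod $p$ is unavailable: integrally a class in $\tilde N^1$ is $f_*\beta$ with $\beta$ an \emph{integral} class on a smooth $Y$, and $y_ix_j$, $y_ix_jx_k$ are not integral. The obstruction is the Milnor operation $Q_1$, of degree $2p-1$, via the comparison with the $BP$-version of algebraic cobordism $\OBP(X)$: the paper's machinery should show that the mod-$p$ reduction of integral $\tilde N^1$ lands in the image of $\OBP(X)\to H^*(X;\bZ/p)$, which consists of infinite cycles of the Atiyah--Hirzebruch spectral sequence and is therefore annihilated, modulo lower filtration, by the first differential $d_{2p-1}=Q_1$. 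One computes directly in $H^*(BG;\bZ/p)$ that $Q_1\alpha_{ij}=y_iy_j^{\,p}-y_i^{\,p}y_j\neq0$ in degree $2p+2$ and $Q_1\alpha=-Q_0Q_1(x_1x_2x_3)\neq0$ in degree $2p+3$; the point is that an \emph{integral} $\beta$ forces $Q_0\beta=0$, so these nonzero values cannot arise from the cobordism image. The bound $N>2p+3$ is exactly what is needed for $Q_1\alpha$, sitting in degree $2p+3$, to lie in the range where $g^*$ is an isomorphism, so that the obstruction is visible on $X$.

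Finally I would assemble $DH^*(X;\bZ)$. In even degrees the algebraic classes $y^I$ and the $\bP^{\infty}$-multiples are $i_*(1)$-pushforwards, hence in $\tilde N^1$, and contribute nothing; the only remaining torsion in $N^1$ are the $Q_0$-images, which by the description $H^*(BG;\bZ)/p\cong\bZ/p[y_1,y_2,y_3](1,\alpha_{ij},\alpha)$ are the $\bZ/p[y]$-multiples of $\alpha_{ij}$ and $\alpha$. The $y$-multiples die in the quotient: since $\alpha_{ij},\alpha$ are integral, $y_k\alpha_{ij}=i_{k*}(i_k^*\alpha_{ij})\in\tilde N^1$ by the projection formula, and likewise $y_k\alpha$. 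Combining with the third step, each $\alpha_{ij},\alpha$ is a nonzero primitive element of $N^1/(p,\tilde N^1)$, giving $DH^*(X;\bZ)\cong\bZ/p\{\alpha_{ij},\alpha\mid 1\le i<j\le3\}$ for $*<N$. I expect the main obstacle to be precisely the third step: making rigorous the inclusion of integral $\tilde N^1$ into the $\OBP$-image and the Riemann--Roch behaviour of $Q_1$ under Gysin maps, so that $Q_1\neq0$ genuinely certifies $\alpha_{ij},\alpha\notin\tilde N^1H^*(X;\bZ)$ rather than merely $\notin\Ker Q_1$ --- this is where the relation between coniveau filtrations and the operations $Q_n$ must do the real work.
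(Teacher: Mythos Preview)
Your overall architecture matches the paper's, but the key obstruction step (your Step~3) is handled much more directly there, and your Step~2 has an unjustified claim.

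For Step~3, there is no need to route through $\OBP$ or the Atiyah--Hirzebruch spectral sequence. The paper proves (Lemma~2.1) by a short Grothendieck-formula computation that $Q_nf_*=f_*Q_n$ for every proper $f$; the point is that $Q_n$ is a derivation killing the Thom/Chern classes, so the Chern correction in $P_t f_*=f_*(c_t P_t)$ cancels. Then the main lemma (Lemma~4.1) is elementary: if $\alpha\in\tilde N^1H^s(X;\bZ)$ with $s=3$ or $4$, write $\alpha=f_*(\alpha')$ with $\alpha'\in H^{s-2r}(Y;\bZ)$, $r\ge 1$, so $|\alpha'|\le 2$. Since $\alpha'$ is integral, $\beta\alpha'=0$; and $P^{\Delta_i}\alpha'=(\alpha')^{p^i}$ if $|\alpha'|=2$ (whence $\beta P^{\Delta_i}\alpha'=0$ by Cartan), while $P^{\Delta_i}\alpha'=0$ if $|\alpha'|\le 1$. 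In either case $Q_i\alpha'=0$, so $Q_i\alpha=f_*Q_i\alpha'=0$. Your computation $Q_1\alpha_{ij}\ne 0$, $Q_1\alpha\ne 0$ then immediately rules out $\alpha_{ij},\alpha\in\tilde N^1H^*(X;\bZ)$. This is exactly the ``Riemann--Roch behaviour of $Q_1$ under Gysin maps'' you were worried about, but it is a two-line computation rather than a spectral-sequence argument; note it uses crucially that $s\le 4$, which is why only $\alpha_{ij}$ and $\alpha$ (not higher $Q_0$-images) are detected this way.

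For Step~2, your projection-formula argument correctly shows $\mathrm{Ideal}(y_1,y_2,y_3)\subset\tilde N^1H^*(X;\bZ/p)$, but you assert without proof that $N^1H^*(X;\bZ/p)$ is contained in this ideal. The paper supplies this via Deligne's identification of the coniveau filtration with the $\tau$-Bockstein filtration on motivic cohomology (Theorem~3.1): since $H^{*,*'}(B(\bZ/p)^n;\bZ/p)\cong\bZ/p[\tau,y_1,\dots,y_n]\otimes\Lambda(x_1,\dots,x_n)$ with $\deg y_i=(2,1)$, $\deg x_i=(1,1)$, one reads off $H^*(X;\bZ/p)/N^1\cong\Lambda(x_1,\dots,x_n)$, hence $N^1=\mathrm{Ideal}(y_i)$. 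Finally, for Step~1 the paper uses the shorter observation (Colliot-Th\'el\`ene--Voisin, Lemma~3.5) that every torsion class in $H^*(X;\bZ)$ lies in $N^1$; since $p\alpha_{ij}=p\alpha=0$, this gives $\alpha_{ij},\alpha\in N^1$ immediately, without choosing divisors.
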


Benoist and Ottem  give  examples,  where the above 
invariants are  nonzero  for $A=\bZ_{(2)}$ by using the Steenrod squares.  
We note that some of their arguments can be extended for $A=\bZ_{(p)}$ by using the
Milnor operation $Q_n$ for odd primes $p$. However it seems not so easy to give a nontrivial example 
for $A=\bZ/p$ in the case $X$ is an approximation for $BG$
as the above examples show.

For connected groups we have
\begin{thm}  Let $G$ be a simply connected group such that $H^*(BG)$ has $p$-torsion.
Then there is an approximation $X$ for $BG$ such that 
   $ DH^4(X)\not =0.$
\end{thm}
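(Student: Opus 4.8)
The plan is to show that the generator $y_4$ of $H^4(BG;\bZ)\cong\bZ$ is a nonzero class in $DH^4(X;\bZ)$. Since $G$ is simply connected, $BG$ is $3$-connected and $H^4(BG;\bZ)\cong\bZ\{y_4\}$, where $y_4$ is the transgression of the generator $x_3\in H^3(G;\bZ)$ dual to $\pi_3(G)$. Choosing $N$ large and an approximation $X$ with $H^*(X;A)\cong H^*(BG\times\bP^\infty;A)$ for $*<N$, we have in that range $H^4(X;\bZ)\cong\bZ\{y_4\}\oplus\bZ\{t^2\}$, with $t\in H^2(\bP^\infty)$ the hyperplane class. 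The first input is that $y_4\in N^1H^4(X;\bZ)$; in fact $N^1H^4(X;\bZ)=H^4(X;\bZ)$, because for an approximation of a classifying space every positive--degree integral class has positive coniveau (this is clear rationally, as $H^{>0}(BG;\bQ)$ is generated by Chern classes, and the integral statement in degree $4$ is part of the coniveau analysis of such $X$).

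The heart of the argument is to bound the strong coniveau filtration by the Milnor operations. First I would observe that
\[ \tilde N^1H^4(X;\bZ)\subseteq\Img\bigl(MU^4(X)\to H^4(X;\bZ)\bigr). \]
Indeed, any element of $\tilde N^1H^4$ is a sum of Gysin images $f_*\beta$ with $f\colon Y\to X$ proper, $Y$ smooth, and $\beta\in H^{4-2r}(Y;\bZ)$ for some $r\ge1$; thus $\beta$ lies in $H^0(Y;\bZ)$ or $H^2(Y;\bZ)$, hence lifts to $MU^*(Y)$ (a first Chern class in the latter case), while the proper map $f$ out of the smooth $Y$ is complex oriented and carries a Gysin homomorphism, so $f_*\beta$ lifts to $MU^4(X)$. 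I then invoke the Atiyah--Hirzebruch type fact underlying this paper's method, that the mod $p$ reduction of the image of complex cobordism is annihilated by every Milnor operation:
\[ \Img\bigl(MU^*(X)\to H^*(X;\bZ)\bigr)\ \mathrm{mod}\ p\ \subseteq\ \bigcap_{n\ge1}\Ker\bigl(Q_n\colon H^*(X;\bZ/p)\to H^*(X;\bZ/p)\bigr). \]
Combining the two displays, the image of $\tilde N^1H^4(X;\bZ)+pH^4(X;\bZ)$ in $H^4(X;\bZ/p)$ lies in $\Ker Q_1$.

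It therefore suffices to check that $Q_1\bar y_4\neq0$ in $H^{2p+3}(X;\bZ/p)$, where $\bar y_4$ is the reduction of $y_4$: then $\bar y_4\notin\Ker Q_1$, so $y_4\notin\tilde N^1H^4(X;\bZ)+pH^4(X;\bZ)$, and since $y_4\in N^1H^4(X;\bZ)$ its class in $DH^4(X;\bZ)$ is nonzero. Because $\beta\bar y_4=0$, one has $Q_1\bar y_4=\pm\beta(St\,\bar y_4)$, where $St$ denotes $Sq^2$ if $p=2$ and $P^1$ if $p$ is odd; thus $Q_1\bar y_4$ is the mod $p$ reduction of the integral Bockstein of $St\,\bar y_4\in H^{2p+2}(BG;\bZ/p)$, an element of $H^{2p+3}(BG;\bZ)$. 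This class is exactly the first $p$-torsion class of $H^*(BG;\bZ)$, and it is nonzero precisely by the hypothesis that $H^*(BG;\bZ)$ has $p$-torsion; for the model case $G=G_2$, $p=2$, one computes $Q_1\bar y_4=Sq^3\bar y_4=Sq^1(Sq^2\bar y_4)=Sq^1y_6=y_7\neq0$. Choosing $N>2p+3$ then gives $Q_1\bar y_4\neq0$ in $X$, and hence $DH^4(X;\bZ)\neq0$. For non--simple $G$ one restricts to a simple factor carrying the $p$-torsion.

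The step I expect to be the main obstacle is this last one: deducing from the bare hypothesis that $H^*(BG;\bZ)$ has $p$-torsion the concrete statement $Q_1\bar y_4\neq0$, i.e.\ that the torsion is already visible on the degree--$4$ generator and that $St\,\bar y_4$ is not the reduction of an integral class. This is the classical identification of the first $p$-torsion of a simply connected $BG$ as $\beta(St\,y_4)$ in degree $2p+3$, and it is where simply--connectedness together with the torsion hypothesis is genuinely used; it also fixes the required range $N>2p+3$. By contrast, the containment of $\tilde N^1$ in the image of cobordism and the $Q_n$-acyclicity of that image modulo $p$ are the standard tools already used for examples such as Theorem 1.
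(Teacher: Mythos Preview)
Your strategy---show $y_4\in N^1H^4$, show $Q_1\bar y_4\neq 0$, conclude that $y_4\notin \tilde N^1H^4$ modulo $p$---is exactly the paper's, and your $MU$ route for bounding $\tilde N^1H^4$ is a valid variant of the paper's Lemma~4.1 (which instead checks directly that $Q_i$ annihilates $H^{\le 2}(Y)$ and then invokes $Q_if_*=f_*Q_i$). But two of the three steps are not actually established in your write-up.

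The containment $y_4\in N^1H^4(X;\bZ)$ is not automatic; your assertion that ``every positive-degree integral class has positive coniveau'' for an approximation of $BG$ is not a general fact, and the rational statement (Chern--Weil) does not imply the integral one. The paper argues as follows: $py_4$ is represented by Chern classes (citing Lemma~3.1 of [Ya4]), hence $py_4\in N^1H^4(X)$; on the open set $U\subset X$ where $py_4$ vanishes, $y_4$ becomes $p$-torsion, so the Colliot-Th\'el\`ene--Voisin lemma (torsion classes lie in $N^1$) gives $y_4\in N^1H^4(U)$, and therefore $y_4\in N^1H^4(X)$. You are missing this chain. For $Q_1\bar y_4\neq 0$, your claim that $\beta(St\,\bar y_4)$ is ``exactly the first $p$-torsion class of $H^*(BG;\bZ)$'' and is nonzero ``precisely by the hypothesis'' is a plausible heuristic but not a proof: it is not a formal consequence of there being $p$-torsion somewhere in $H^*(BG)$. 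The paper resolves this by restriction to explicit subgroups: for $p=2$ the embedding $G_2\subset G$ induces an isomorphism on $H^4$, and $Q_1w_4=w_7\neq 0$ in $H^*(BG_2;\bZ/2)$ is a known computation; for $p=3,5$ one passes through $F_4$, $E_8$ and then to an elementary abelian $(\bZ/p)^3\subset G$ with $j^*(y_4)=Q_0(x_1x_2x_3)$, reducing to the calculation already done in \S 5. Your instinct that this is the hard step is correct, but the resolution is via these concrete subgroup detections rather than an abstract first-torsion criterion.
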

	\begin{thm}
Let $p$ be an odd prime number, and
$G=PGL_p$. 
Then there is an  approximation $X$ for $BG$ such that  
$DH^3(X)\not =0$. \end{thm}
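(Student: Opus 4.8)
The plan is to exhibit in each case a single $p$-torsion class $\alpha\in H^3(BG;\bZ)$ of the form $\alpha=Q_0(x)$ with $x\in H^2(BG;\bZ/p)$, transport it to an approximation $X$ of sufficiently high level (legitimate in degree $3<N$, and since $H^1(BG;\bZ)=H^2(BG;\bZ)=0$ for these connected groups the $\bP^{\infty}$-factor contributes nothing in this degree, so $H^3(X;\bZ)\cong H^3(BG;\bZ)$), and show it is nonzero in $DH^3(X;\bZ)$. For $p=2$, $G=SO_n$, I take $x=w_2$ and $\alpha=W_3=Q_0(w_2)=\beta(w_2)$, the integral third Stiefel--Whitney class, working in Quillen's presentation $H^*(BSO_n;\bZ/2)=\bZ/2[w_2,\dots,w_n]$; here $H^3(BSO_n;\bZ)=\bZ/2\{W_3\}$ and we need $n\ge3$ so that $w_3$ exists. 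For $p$ odd, $G=PGL_p$, I take $x$ to be the degree $2$ class classifying the central extension $1\to\mu_p\to SL_p\to PGL_p\to1$ and $\alpha=Q_0(x)$, the order-$p$ class in $H^3(BPGL_p;\bZ)$ (Vistoli, Kameko--Yagita). In both cases the mod $p$ reduction $\bar\alpha=Q_0(x)$ is a nonzero element of $H^3(X;\bZ/p)$.

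First I would prove $\alpha\in N^1H^3(X;\bZ)$ via the identification $N^1H^i=\Ker\big(H^i(X)\to\varinjlim_U H^i(U)\big)$, i.e. membership in $N^1$ is vanishing at the generic point. At the generic point $x$ becomes a sum of symbols: the Hasse--Witt/Clifford invariant of the generic quadratic form for $SO_n$, and a degree-$p$ Brauer class for $PGL_p$, which by Merkurjev--Suslin is again a sum of symbols $\{a\}\cup\{b\}$. Over a field containing $\bC$ the operation $Q_0$ kills $H^1$: for $p=2$ one has $Q_0\{a\}=\{a\}^2=\{-1\}\cup\{a\}=0$, and for $p$ odd every $\{a\}\in H^1(F;\mu_p)=F^*/(F^*)^p$ lifts to $H^1(F;\mu_{p^2})$, so $Q_0\{a\}=0$. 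Since $Q_0$ is a derivation, $\bar\alpha|_{\mathrm{gen}}=Q_0(\mathrm{symbol})=0$; as $\alpha$ is $p$-torsion this gives $\alpha|_{\mathrm{gen}}=0$, whence $\alpha\in N^1$.

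The crux is $\alpha\notin\tilde N^1H^3(X;\bZ)+pH^3(X;\bZ)$, equivalently $\bar\alpha\notin\tilde N^1H^3(X;\bZ/p)$. The key structural input is that the Milnor operations commute with proper pushforward of smooth complex varieties: for $f\colon Y\to X$ proper with $Y$ smooth one has $Q_nf_*=f_*Q_n$, because $Q_n$ annihilates the Thom class of a complex bundle (for a line bundle with Euler class $t$ one has $Q_1t=\pm Q_0(t^p)=0$, and one extends by the splitting principle and the derivation property). Hence $\tilde N^1$ is stable under each $Q_n$, and every class liftable to the image of algebraic cobordism lies in $\bigcap_n\Ker Q_n$. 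I would then compute from the presentations above that $Q_1\bar\alpha\neq0$: for $p=2$ one finds $Q_1\bar\alpha=Q_1w_3=Sq^1Sq^2w_3=Sq^1(w_2w_3+w_5)=w_3^2\neq0$, and the analogous non-vanishing holds for $PGL_p$. Thus $\bar\alpha\notin\bigcap_n\Ker Q_n$. Matching this against the $f_*$-description of $\tilde N^1H^3$ --- which in degree $3$ is generated by Gysin images $f_*(c)$ with $\codim f=1$ and $c\in H^1(Y;\bZ/p)$ --- and combining the commutation $Q_nf_*=f_*Q_n$ with the Riemann--Roch/Wu constraints forced by smoothness of $Y$, I would rule out $\bar\alpha$ being such a combination, giving $\bar\alpha\notin\tilde N^1$ and therefore $DH^3(X;\bZ)\neq0$.

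The main obstacle is precisely this last matching. Unlike even-degree (algebraic) cycle classes, the generators $f_*(c)$ with $c\in H^1(Y;\bZ/p)$ of $\tilde N^1H^3$ are not a priori annihilated by the $Q_n$, so one cannot simply invoke an inclusion $\tilde N^1\subseteq\bigcap_n\Ker Q_n$. Controlling these odd-degree Gysin contributions --- showing that mod $p$ they reduce to Bocksteins of even, hence $Q_0$-exact, classes and so cannot produce a class with $Q_1\bar\alpha\neq0$ --- is where the real work lies, and it must be carried out separately (though in parallel) for the Stiefel--Whitney calculus of $BSO_n$ and for the Brauer-class calculus of $BPGL_p$. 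Once this is in place, the two cases run uniformly and yield $DH^3(X;\bZ)\neq0$.
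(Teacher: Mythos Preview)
Your setup is right --- take $\alpha=Q_0(x)$ with $|x|=2$ and show $Q_1\alpha\neq 0$ --- but the ``crux'' paragraph contains a wrong reduction and then manufactures a difficulty that is not there.

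First, the claimed equivalence ``$\alpha\notin\tilde N^1H^3(X;\bZ)+pH^3(X;\bZ)$, equivalently $\bar\alpha\notin\tilde N^1H^3(X;\bZ/p)$'' is not an equivalence: the image of $\tilde N^1H^3(X;\bZ)$ in $H^3(X;\bZ/p)$ can be strictly smaller than $\tilde N^1H^3(X;\bZ/p)$. Indeed the paper exhibits examples (e.g.\ $G=(\bZ/p)^3$) where $DH^*(X;\bZ)\neq 0$ while $DH^*(X;\bZ/p)=0$. So by passing to $\bZ/p$ you are attempting to prove a strictly stronger statement than the theorem requires, and it is this stronger statement that creates your ``main obstacle''.

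Second, once you stay with integral coefficients the obstacle evaporates in one line, and this is exactly the paper's Lemma~4.1. If $\alpha\in\tilde N^1H^3(X;\bZ)$ then $\alpha=f_*(\alpha')$ with $\alpha'\in H^{1}(Y;\bZ)$. The mod~$p$ reduction of $\alpha'$ therefore satisfies $\beta\alpha'=0$; and since $|\alpha'|=1$ we have $P^{\Delta_i}\alpha'=0$ for every $i\ge 1$ by the unstability of the reduced powers. Hence $Q_i\alpha'=(P^{\Delta_i}\beta-\beta P^{\Delta_i})\alpha'=0$, and the commutation $Q_if_*=f_*Q_i$ gives $Q_i\alpha=0$, contradicting $Q_1\alpha\neq 0$. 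There is no ``Riemann--Roch/Wu constraint'' to invoke and no separate Stiefel--Whitney or Brauer calculus to carry out; the whole point is that integrality of $\alpha'$ kills $\beta\alpha'$, which is precisely what fails if you only assume $\alpha'\in H^1(Y;\bZ/p)$.

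A minor remark on your $N^1$ step: your generic-point/symbol argument can be made to work, but it is unnecessary. Since $\alpha$ is $p$-torsion, the Colliot-Th\'el\`ene--Voisin result (Lemma~3.5 in the paper) already gives $\alpha\in N^1H^3(X;\bZ)$ for any smooth $X$.
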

\begin{thm} Let $p=2$ and $X_n=X_n(N)$ be approximations for $BSO_n$  with $n\ge 3$.  Then for sufficient large $N$
\[  DH^*(X_{2m+1})\supset  \bZ/2\{w_3,w_5,...,w_{2m+1}\}
\quad for \ *<N\]
where $w_i$ is the $i$-th Stiefel-Whiteny class.
\end{thm}

\section{transfer and $Q_n$}

The Milnor operation (in $H^*(-;\bZ/p)$)
is defined by $Q_0=\beta$ and for $n\ge 1$
\[ Q_n=P^{\Delta_n} \beta-\beta P^{\Delta_n}, \quad \Delta_n=(0,..,0,\stackrel{n}{1},0,...) \]
(For details see \cite{Mi}, \S 3.1 in \cite{Vo1})
where $\beta$ is the Bockstein operation and $P^{\alpha}$
for $\alpha=(\alpha_1,\alpha_2,...)$ is the fundamental base of the module of finite sums of products of reduced powers.
. 
\begin{lemma} Let $f_*$ be the transfer (Gysin) map (for 
 proper smooth) $f:X\to Y$.
Then  $Q_nf_*(x)=f_*Q_n(x)$ for $x\in H^*(X;\bZ/p)$.
\end{lemma}

The above lemma is known (see the proof of Lemma 7.1 in \cite{Ya2}). The transfer $f_*$ is 
expressed as $g^*f_*'$ such that
\[ f_*'(x)=i^*(Th(1)\cdot x), \quad x\in H^*(X;\bZ/p)\]
for some maps $g,f', i$ and the Thom class $Th(1)$.  Since 
$Q_n(Th(1))=0$ and $Q_i$ is a derivation, we get the lemma.
However, we give here the another computational proof.

\begin{proof}[Proof of Lemma 2.1.]
Recall the following  Grothendieck formula (e.g., [Q1])
\[(1)\quad P_t(f_*(x))=f_*(c_t\cdot P_t(x)).\]
Here the total reduced powers $P_t(x)$ are defined 
\[P_t(x)=\sum_{\alpha}P^{\alpha}(x)t^{\alpha}\in H^*(X;\bZ/p)[t_1,t_2,...]\quad with 
\ t^{\alpha}=t_1^{\alpha_1}t_2^{\alpha_2}...\]
where $\alpha=(\alpha_1,\alpha_2,...)$ and $degree(t^{\alpha})=
\sum_i 2\alpha_i(p^i-1)$ (each element in $H^*(X;\bZ/p)$ is represented as a homogeneous part respective  to the above degree).
The total Chern class $c_t$ is defined similarly, for the Chern classes of the normal bundle of the map $f$.

We consider the above equation with the assumption such that 
$t_n^2=0$ and $t_j=0$ for $j\not =n$, i.e., 
$P_t(x)\in H^*(X;\bZ/p)\otimes \Lambda(t_n)$. 
That means
\[(2)\quad  P_t(f_*(x))=(1+P^{\Delta_n}t_n)(f_*(x))\]
\[ (3)\quad f_*(c_t\cdot P_t(x))=f_*((1+c_{p^n-1}t_n)(x+P^{\Delta_n}(x)t_n))
=f_*(x+(c_{p^n-1}x+P^{\Delta_n}(x))t_n).\]
From (1), we see $(2)=(3)$ and we have
\[ (4)\quad P^{\Delta_n}(f_*(x))=f_*(c_{p^n-1}x+P^{\Delta_n}(x)).\]

By the definition, $\beta$ commutes with $f_*$, and we have
\[(5)\quad  P^{\Delta_n}\beta(f_*(x))=P^{\Delta_n}f_*(\beta x)=
f_*(c_{p^n-1}\beta x+P^{\Delta_n}(\beta x)).\]

On the other hand 
\[(6)\quad \beta P^{\Delta_n}f_*(x)\stackrel{(4)}{=}\beta f_*(c_{p^n-1}x+P^{\Delta_n}(x))
=f_*(c_{p^n-1}\beta x+\beta P^{\Delta_n}(x)).\]
Then $(5)-(6)$ gives  that
$(P^{\Delta_n} \beta-\beta P^{\Delta_n})f_*(x)=f_*(P^{\Delta_n} \beta-\beta P^{\Delta_n})(x).$
\end{proof}

By the definition, each cohomology operation $h$ (i.e., an element in the Steenrod algebra) is written (with $Q^B=Q_0^{b_0}Q_1^{b_1}...$)
by
\[h=\sum_{A,B}P^AQ^B\quad with\ A=(a_1,...),\ B=(b_0,...)\
b_i=0\ or\  1.\]

\begin{cor}  We have \ 
$ P_tQ^B(f_*(x))=f_*(c_t\cdot P_tQ^B(x)).$
\end{cor}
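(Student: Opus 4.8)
The plan is to derive this as a purely formal consequence of the two facts already established: the Grothendieck formula (1) and the commutativity statement of Lemma 2.1. First I would observe that $Q^B=Q_0^{b_0}Q_1^{b_1}\cdots$ is, by hypothesis (each $b_i=0$ or $1$), a finite composition of the Milnor operations $Q_n$. Since Lemma 2.1 gives $Q_nf_*=f_*Q_n$ for every single $n$, an immediate induction on the number of factors yields $Q^Bf_*=f_*Q^B$; that is, the whole product $Q^B$ commutes with the transfer $f_*$.

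The second step is to feed the element $y=Q^B(x)$ into the Grothendieck formula. Applying (1) with $y$ in place of $x$ gives
\[ P_t(f_*(Q^B(x)))=f_*(c_t\cdot P_t(Q^B(x))). \]
Then I would rewrite the left-hand side using the commutativity from the first step, namely $f_*(Q^B(x))=Q^B(f_*(x))$, so that $P_t(f_*(Q^B(x)))=P_t(Q^B(f_*(x)))$. Combining the two identities produces exactly
\[ P_tQ^B(f_*(x))=f_*(c_t\cdot P_tQ^B(x)), \]
as claimed.

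There is essentially no serious obstacle here, since the corollary is formal once the two inputs are in place. The only point deserving a word of care is that the induction in the first step relies on $Q^B$ being a \emph{finite} product, so that moving $f_*$ past one factor $Q_n$ at a time terminates; this is guaranteed because only finitely many $b_i$ are nonzero. I would also remark that the restriction $b_i\in\{0,1\}$ plays no role in the commutativity argument itself — it merely fixes the normalization under which the $Q^B$ form a basis — so the identity would in fact persist for arbitrary nonnegative exponents.
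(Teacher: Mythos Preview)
Your argument is correct and is exactly the intended one: the paper states the corollary immediately after Lemma 2.1 with no separate proof, precisely because it follows formally by iterating $Q_nf_*=f_*Q_n$ to get $Q^Bf_*=f_*Q^B$ and then substituting $Q^B(x)$ into the Grothendieck formula (1). Your closing remarks on finiteness and on the role of the condition $b_i\in\{0,1\}$ are also accurate.
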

Hence cohomology operations $h$ (for $H^*(-;\bZ/p))$  which commute with
all transfer $f_*$ are cases $c_t=1$, i.e. $A=0$ which are only products $Q^B$ of Milnor operations $Q_i$.

\section{coniveau filtrations}

Bloch-Ogus \cite{Bl-Og} give a spectral sequence such that its $E_2$-term is given by
\[E(c)_2^{c,*-c}\cong 
H_{Zar}^c(X,\mathcal{H}_{A}^{*-c})\Longrightarrow H_{et}^*(X;A)\]
where $\mathcal{H}_{A}^*$ is the  Zariski sheaf induced from the  presheaf  given by $U\mapsto H_{et}^*(U;A)$
for an open $U\subset X$.

The filtration for this spectral sequence is defined as the coniveau filtration 
  \[N^cH_{et}^*(X;A)= F(c)^{c,*-c}\]
where the infinite term $ E(c)^{c,*-c}_{\infty}\cong  F(c)^{c,*-c}/F(c)^{c+1,*-c-1}$ and 
\[ N^cH^*_{et}(X;A)=\sum_{Z\subset X\ ;\ codim_X(Z)\le c} ker(j^*:H^*_{et}(X;A)\to
H^*_{et}(X-Z,A)).\]

Here we recall the motivic cohomology 
$H^{*,*'}(X;\bZ/p)$ defined by Voevodsky and Suslin (\cite{Vo1},\cite{Vo3},\cite{Vo4})
so that
\[ H^{i,i}(X;\bZ/p)\cong H^i_{et}(X;\bZ/p)\cong H^i(X;\bZ/p).\]

Let us write  
$H^*_{et}(X;\bZ )$ simply 
by $H^*_{et}(X)$ as usual.
Note that we do $not$ assume  $H^*_{et}(X) \cong H^*(X)$
while we have the natural map $H_{et}^*(X)\to H^*(X)$.

Let $0\not =\tau \in H^{0,1}(Spec(\bC);\bZ/p)$.  Then by the  multiplying by $\tau$, we can define a map
$H^{*,*'}(X;\bZ/p)\to H^{*,*'+1}(X;\bZ/p)$.
By Deligne ( foot note (1) in Remark 6.4 in \cite{Bl-Og}) and
Paranjape (Corollary 4.4 in \cite{Pa}), it is proven that
there is an isomorphism of the coniveau spectral sequence
with the $\tau$-Bockstein spectral sequence $E(\tau)_r^{*,*'}$
(see also \cite{Te-Ya}, \cite{Ya1}).

\begin{thm} (Deligne)
Let $A=\bZ/p$.  Then we have 
the isomorphism of spectral sequence 
\[E(c)_r^{c,*-c}\cong E(\tau)_{r-1}^{*,*-c} \quad 
for \ r\ge 2.\] 
Hence the filtrations are the same, i.e. 
$N^cH_{et}^*(X;\bZ/p)= F_{\tau}^{*,*-c}$
where 
\[F_{\tau}^{*,*-c}=Im(\times \tau^c:H^{*,*-c}(X;\bZ/p)\to H^{*,*}(X;\bZ/p)).\]
\end{thm}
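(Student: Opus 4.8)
The plan is to realize both spectral sequences as arising from exact couples and to compare them through the Beilinson--Lichtenbaum theorem. The coniveau spectral sequence $E(c)$ comes from the filtration of $X$ by codimension of support, whose $E_1$-page is the Bloch--Ogus Gersten complex and whose abutment is $H^*_{et}(X;\bZ/p)$. The $\tau$-Bockstein spectral sequence $E(\tau)$ is the spectral sequence of the $\tau$-adic tower
\[ \cdots \xrightarrow{\tau} H^{*,q-1}(X;\bZ/p) \xrightarrow{\tau} H^{*,q}(X;\bZ/p) \xrightarrow{\tau} \cdots, \]
that is, the exact couple built from the long exact sequences of the cofiber of multiplication by $\tau$. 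Since $\tau$ becomes invertible \'etale-locally, the $\tau$-inverted motivic cohomology is $H^*_{et}(X;\bZ/p)$, so the abutments of the two spectral sequences agree; it remains to identify the pages and the filtrations.

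First I would invoke the Beilinson--Lichtenbaum theorem (the Bloch--Kato conjecture, after Voevodsky--Rost): the natural map $H^{p,q}(X;\bZ/p)\to H^p_{et}(X;\bZ/p)$ is an isomorphism for $p\le q$ and injective for $p=q+1$. In particular $H^{*,*}(X;\bZ/p)\cong H^*_{et}(X;\bZ/p)$, so $F_\tau^{*,*-c}=\Img(\times\tau^c)$ is a genuine decreasing filtration of $H^*_{et}(X;\bZ/p)$, and multiplication by $\tau\colon H^{p,q}\to H^{p,q+1}$ is an isomorphism once $q\ge p$. This pins down where the $\tau$-tower stabilizes and shows that the associated graded of $F_\tau$ is computed by the cohomology of the cofiber of $\tau$.

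Next I would identify the first nontrivial page $E(\tau)_1$ of $E(\tau)$ with the Bloch--Ogus $E_2$-page. The essential input is the sheaf-level form of Beilinson--Lichtenbaum together with the Gersten resolution for motivic cohomology: the cofiber of $\times\tau$ on the complex of Zariski sheaves $\bZ/p(q)$ is identified, up to shift, with the sheaf $\mathcal{H}^{*-c}_{\bZ/p}$, so that the $d_1$-differential of the $\tau$-Bockstein complex is exactly the differential in the Cousin/Gersten complex computing $H^c_{Zar}(X,\mathcal{H}^{*-c}_{\bZ/p})$. Matching these, together with the degree shift inherent in passing from the $(0,1)$-operation $\tau$ to the codimension grading, produces the reindexing $E(c)^{c,*-c}_r\cong E(\tau)^{*,*-c}_{r-1}$ for $r\ge 2$ (the base case being $E(c)_2\cong E(\tau)_1$), and shows the induced filtrations coincide, i.e. $N^cH^*_{et}(X;\bZ/p)=F_\tau^{*,*-c}$.

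The main obstacle is precisely this sheaf-theoretic comparison: one must check that the cofiber of $\tau$-multiplication, as a complex of Zariski sheaves, has cohomology sheaves matching the $\mathcal{H}^*_{\bZ/p}$ \emph{and} that the resulting differentials agree termwise with those of Bloch--Ogus, not merely that the abstract $E_2$-groups are isomorphic. This rests on the Gersten conjecture for motivic cohomology and on careful bookkeeping of the index shift; convergence of both spectral sequences is guaranteed by $X$ being smooth of finite dimension, so the finer point is entirely the naturality of the identification of pages and differentials. Given the references to Deligne and Paranjape, I would follow Paranjape's general machinery for filtered complexes to carry out this identification functorially rather than group-by-group.
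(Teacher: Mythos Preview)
The paper does not actually give a proof of this theorem: it is stated as a result attributed to Deligne (footnote (1) in Remark~6.4 of [Bl-Og]) and to Paranjape (Corollary~4.4 in [Pj]), with further pointers to [Te-Ya] and [Ya1], and the paper simply quotes the conclusion before moving on to Lemma~3.2. So there is no in-paper argument to compare against.

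That said, your outline is a reasonable reconstruction of what those references contain. The ingredients you name --- the Bloch--Ogus/Gersten description of the coniveau $E_1$-page, the $\tau$-Bockstein exact couple on motivic cohomology, Beilinson--Lichtenbaum to control where $\times\tau$ is an isomorphism, and Paranjape's filtered-complex formalism to make the comparison functorial and to match differentials rather than just groups --- are precisely the inputs invoked in [Pj], [Te-Ya], and [Ya1]. Your own caveat is well placed: the genuine content is the identification of the cofiber of $\tau$ (sheafified) with the appropriate shift of $\mathcal{H}^*_{\bZ/p}$ together with the compatibility of differentials, and this is where one must really appeal to the Gersten resolution for motivic cohomology and to Paranjape's machinery rather than argue page-by-page. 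One small point: the shift $E(c)_r\cong E(\tau)_{r-1}$ comes not from $\tau$ having bidegree $(0,1)$ per se, but from the fact that the coniveau spectral sequence is conventionally indexed from the $E_1$-page of the Cousin complex, so that its $E_2$ already incorporates one differential that on the $\tau$-side is the $d_1$; you state the conclusion correctly but the reason you give for the reindexing is slightly off.
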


\begin{lemma}
Let $x\in H^{*,*}(X)$ and also in $N^cH^*(X;\bZ/p)$. 
Then if  the map $H^{*+1,*-c}(X)\to H^{*+1,*}(X)$
is injective,  then $x\in N^cH^*(X)$.
\end{lemma}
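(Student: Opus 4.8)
The plan is to reduce the assertion to a Bockstein lifting problem and to solve it by a diagram chase in which the injectivity hypothesis is exactly what forces the relevant obstruction to vanish. The one input from the theory is Deligne's identification (Theorem 3.1): the assumption $x\in N^cH^*(X;\bZ/p)$ says precisely that the mod $p$ reduction $\rho(x)\in H^{*,*}(X;\bZ/p)$ lies in $F_{\tau}^{*,*-c}=\mathrm{Im}(\times\tau^c)$, so I may fix $\bar y\in H^{*,*-c}(X;\bZ/p)$ with $\rho(x)=\tau^c\bar y$.

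Next I would compare the Bockstein long exact sequences of $0\to\bZ_{(p)}\xrightarrow{p}\bZ_{(p)}\to\bZ/p\to0$ in weights $*-c$ and $*$, linked by the weight-raising map of the hypothesis, whose reduction mod $p$ is $\times\tau^c$ and which is compatible with the integral Bockstein $\tilde\beta$ by the integral comparison underlying Theorem 3.1 (cf. [Te-Ya], [Ya1]). Thus the square
\[
\begin{CD}
H^{*,*-c}(X;\bZ/p) @>{\tilde\beta}>> H^{*+1,*-c}(X)\\
@V{\times\tau^c}VV @VVV\\
H^{*,*}(X;\bZ/p) @>{\tilde\beta}>> H^{*+1,*}(X)
\end{CD}
\]
commutes. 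Applying the right-hand vertical to $\tilde\beta(\bar y)$ gives $\tilde\beta(\tau^c\bar y)=\tilde\beta(\rho(x))=0$, since $\tilde\beta\circ\rho=0$. As the right-hand vertical $H^{*+1,*-c}(X)\to H^{*+1,*}(X)$ is injective by hypothesis, I conclude $\tilde\beta(\bar y)=0$; exactness of the weight-$(*-c)$ sequence then produces an integral class $y\in H^{*,*-c}(X)$ with $\rho(y)=\bar y$. This vanishing of the Bockstein obstruction, forced by the injectivity, is the arithmetic heart of the argument.

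Finally I would raise $y$ in weight to $z\in H^{*,*}(X)$ by the same map; then $z$ lies in $N^cH^*(X)$ because it is carried up from weight $*-c$, while commutativity of the left-hand square gives $\rho(z)=\tau^c\bar y=\rho(x)$, so that $x-z\in\ker(\rho)=p\,H^{*,*}(X)$. Thus $x$ agrees with the coniveau class $z$ after reduction mod $p$. Promoting this congruence to the honest membership $x\in N^cH^*(X)$ is the step I expect to be the main obstacle: one must control the $p$-divisible discrepancy $x-z$, i.e. know that the image of the integral weight-raising map is exactly the integral coniveau and is suitably saturated. I would settle this by invoking the integral refinement of the coniveau/$\tau$-Bockstein comparison to identify $N^cH^*(X)$ with that image, and by using that in the intended applications the classes $x$ are $p$-torsion, so the correction $x-z$ is itself torsion and is absorbed by re-running the chase on the difference, which again only needs the injectivity already assumed.
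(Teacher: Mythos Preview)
Your diagram chase is exactly the paper's: it sets up the same commutative ladder of Bockstein sequences (with maps it labels $r_i,\delta_i,f_i$), uses Theorem~3.1 to get a mod-$p$ class $x'$ in weight $*-c$ with $f_2(x')=r_2(x)$, kills $\delta_1(x')$ via the injectivity of $f_3$, and lifts to an integral $x''\in H^{*,*-c}(X)$---and then simply stops there. So the step you flag as the ``main obstacle'' (passing from $f_1(x'')\equiv x\pmod p$ to $x\in N^cH^*(X)$) is one the paper's proof does not spell out either; note, though, that your proposed torsion-based patch would not cover the paper's own application of the lemma to $w_4\in H^4(BG_2)$, which is torsion-free.
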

\begin{proof} Consider the exact sequences
\[ \begin{CD} @>{p}>>{ x''\in H^{*,*-c}(X)} @>{r_1}>> 
x'\in H^{*,*-c}(X;\bZ/p)
@>{\delta_1}>>  H^{*+1,*-c}(X) \\
  @.  @VV{f_1}V  @VV{f_2}V @VV{f_3}V  \\
@>{p}>> x\in H^{*,*}(X) @>{r_2}>> H^{*,*}(X;\bZ/p)
@>{\delta_2}>> H^{*+1,*}(X)
\end{CD} \]
By the assumption of this lemma,  we can take $x'\in H^{*,*-c}(X;\bZ/p)$
such that $r_2(x)=f_2(x')$. So $\delta_2f_2(x')=0$.  Since $f_3$ is injective,
we see $\delta_1(x')=0$,  Hence there is $x''\in H^{*.*-c'}(X)$
such that $r_1(x'')=x'$.
\end{proof}
Let $cl: CH^*(X)\otimes A\to H^{2*}(X;A)$ be the cycle map.
and $Im(cl)^+$ is the positive degree parts of its image.

\begin{lemma} We see that $Im(cl)^+\subset N^*H^{2*}(X;A)$.
\end{lemma}
\begin{proof}  Recall that $H^{*.*'}(X;A)\subset N^{*-*'}H^*(X;A)$.  We have $H^{2*,*}(X;A)\cong CH^*(X)\otimes A.$
Since $2*>*$ for $*\ge 1$, we see $cl(y)\in N^{1}H^{2*}(X;A)$.
\end{proof}

Hence $y\in CH^*(X)\otimes A$ is represented by closed algebraic set 
supported $Y$, while $Y$ may be singular.

On the other hand,  by Totaro \cite{To}, we have the modified cycle map $\bar cl$
\[ cl:CH^*(X)\otimes A\stackrel{\bar cl}{\to} MU^{2*}(X)
\otimes_{MU^*}A\stackrel{\rho}{\to} H^{2*}(X;A)\]
for the complex cobordism theory $MU^*(X)$.
It is known \cite{Qu1} that elements in $MU^{2*}(X)$ is written by 
a proper map to X from a  stable almost complex manifold $Y$.
while it does not mean complex manifold.

The following lemma is well known.
\begin{lemma}
If $x\in Im(\rho)$ for $\rho: MU^*(X)/p\to H^*(X;\bZ/p)$, then 
we have $Q_i(x)=0$ for all $i\ge 0$. 
\end{lemma}
\begin{proof}
In fact, it is known that  $d=Q_i$ for the first nonzero differential $d$
of the Atiyah-Hirzebruch spectral sequence converging to the connected 
Morava K-theory $k(i)^*(X)$ with $k(i)^*=\bZ/p[v_i]$. We have the natural maps
\[ \rho:MU^*(X)/p \stackrel{\rho_1}{\to} 
 k(i)^*(X)\stackrel{\rho_2}{\to} H^*(X:\bZ/p).\]
 Hence  $Q_i\rho_2(x)=0$
implies $Q_i\rho(x)=0$.
 \end{proof}

\begin{lemma} (reciprocity law)
If $a\in \tilde N^*H^{2*}(X;A)$, then for each $g\in DH^{*'}(X;A)$ we have
$ag\in \tilde N^*H^{2*+*'}(X;A)$.
\end{lemma}
\begin{proof}
Suppose we have $f:Y\to X$ with $f_*(a')=a$.
Then \[  f_*(a'f^*(g))=f_*(a')g=ag\]
 by Frobenius reciprocity law.
\end{proof} 

Let $G$ be an algebraic group (over $\bC$) and $r$
be  a complex representation $r:G\to U_n$ the unitary group.  Then  we can define
the Chern class $c_i=r^*c_i^U$. Here the Chern classes
$c_i^U$  
in  $H^*(BU_n)\cong \bZ[c_1^U,...,c_n^U]$ are defined by
using  the Gysin map as $c_n^U=i_{n,*}(1)$ for 
\[ i_n^* : H^*(BU_n)\cong H_{U_n}^*(pt.)\stackrel{i_{n,*}}{\to} H_{U_n}^{*+2i}(\bC^{\times i})\cong H^{*+2i}(BU_n)\]
where $H_{U_n}(X)=H^*(EU_n\times_{U_n}X)$ is the $U_n$-equivaliant cohomology.

\begin{lemma}
For an approximation $X$ of $BG$ and 
for each $g\in H^{*'}(BG;A)$, we have
$gc_i=0\in DH^*(X;A)$.
\end{lemma}

The following lemma is proved by 
 Colliot Th\'er\`ene and Voisin
\cite{Co-Vo} by using the
affirmative answer of the Bloch-Kato conjecture
by Voevodsky. (\cite{Vo3}. \cite{Vo4})
\begin{lemma}  (\cite{Co-Vo})  Let $X$ be a smooth complex variety.
Then any torsion element in $H^*(X)$ is in
$N^1H^*(X)$.
\end{lemma}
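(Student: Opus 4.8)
The plan is to prove the equivalent statement that the quotient $H^i(X)/N^1H^i(X)$ is torsion-free for every $i$. Since $N^1$ is a subgroup, this is the same assertion: if the quotient has no torsion, then a torsion class $y$ has trivial image $\bar y$ in $H^i(X)/N^1$, so $y\in N^1H^i(X)$. Thus I would work with the quotient rather than chasing an individual $y$.

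First I would feed this into the Bloch--Ogus spectral sequence. In the zeroth column the differentials are only outgoing, so $H^i(X)/N^1H^i(X)=E(1)^{0,i}_{\infty}$ is a subgroup of $E(1)^{0,i}_2=H^0_{Zar}(X,\mathcal H^i_{\bZ_{(p)}})$. Using the Bloch--Ogus--Gabber (Gersten) resolution of the sheaf $\mathcal H^i_{\bZ_{(p)}}$ on the smooth variety $X$, its group of global sections injects into the generic stalk, so
\[ H^i(X)/N^1H^i(X)\ \hookrightarrow\ H^0_{Zar}(X,\mathcal H^i_{\bZ_{(p)}})\ \hookrightarrow\ \varinjlim_{U\subset X}H^i(U;\bZ_{(p)}), \]
the colimit over dense open $U$ (the cohomology of the generic point, i.e.\ the unramified cohomology sitting inside it). Hence it suffices to prove that this generic cohomology group is torsion-free; torsion-freeness then passes to the subgroup on the left.

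The heart of the matter is therefore to show $\varinjlim_U H^i(U;\bZ_{(p)})$ has no $p$-torsion. Writing $\partial$ for the integral Bockstein (the connecting map of $0\to\bZ_{(p)}\xrightarrow{p}\bZ_{(p)}\to\bZ/p\to0$) and using exactness of filtered colimits, the kernel of multiplication by $p$ on the target is exactly the image of
\[ \partial\colon \varinjlim_U H^{i-1}(U;\bZ/p)\ \longrightarrow\ \varinjlim_U H^i(U;\bZ_{(p)}); \]
since a $\bZ_{(p)}$-module with $\Ker(\times p)=0$ is torsion-free, it is enough to show $\partial$ vanishes. The source is the Galois cohomology $H^{i-1}(k(X);\bZ/p)$ of the function field ($k=\bC$): for finite coefficients Betti and \'etale cohomology agree, and \'etale cohomology commutes with the limit over the opens defining the generic point. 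Here the affirmative answer to the Bloch--Kato conjecture enters decisively: it identifies $H^{i-1}(k(X);\bZ/p)$ with $K^M_{i-1}(k(X))/p$, so this group is generated by symbols $(f_1)\cup\cdots\cup(f_{i-1})$ with $f_j\in k(X)^{\times}$. Each degree-one factor $(f_j)$ is the reduction of an integral class (the pullback of the generator of $H^1(\bC^{\times};\bZ_{(p)})$ along $f_j$), hence so is their cup product; as $\partial$ annihilates anything lifted from $\bZ_{(p)}$-coefficients (consecutive maps in the long exact sequence), $\partial$ kills every symbol and so vanishes on all of $H^{i-1}(k(X);\bZ/p)$. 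Therefore $\varinjlim_U H^i(U;\bZ_{(p)})$ is torsion-free and the lemma follows.

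I expect the main obstacle to be exactly this generic vanishing of the Bockstein, and it is no accident that it rests on Bloch--Kato: it is precisely the statement that the mod-$p$ cohomology of the function field is spanned by symbols which lift to $\bZ_{(p)}$-coefficients. Without this input one cannot exclude torsion at the generic point (indeed for the integral ``strong'' variant the analogue fails, cf.\ Benoist--Ottem), so the argument genuinely hinges on Voevodsky's theorem. The remaining points — that the coniveau spectral sequence is exact in the zeroth column, that the Gersten resolution is available on smooth $X$, and that over $\bC$ the symbol $(f_j)$ is compatible with the Betti class of $f_j$ — are standard, and I would verify them only for bookkeeping.
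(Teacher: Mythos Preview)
The paper does not prove this lemma itself; it merely cites Colliot-Th\'el\`ene--Voisin and records that their proof uses the Bloch--Ogus spectral sequence together with Voevodsky's affirmative solution of the Bloch--Kato conjecture. Your proposal follows exactly that route---embedding $H^i(X)/N^1H^i(X)$ into the generic stalk via Bloch--Ogus/Gersten and then killing the Bockstein on symbols using Bloch--Kato---and is correct; the only cosmetic slip is the notation $E(1)^{0,i}_\infty$, which in the paper's conventions should read $E(c)^{0,i}_\infty$.
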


\section{the main lemmas}

The following lemma is the $Q_i$-version of  one of results by Benoist and Ottem.
\begin{lemma}
Let $\alpha\in N^1H^s(X)$ for $s=3$ or $4$.
If $Q_i(\alpha)\not =0\in H^*(X;\bZ/p)$ for some $i\ge 1$,  then
\[ DH^s(X)\supset \bZ/p\{\alpha\},\quad 
DH^s(X;\bZ/p^t)\supset \bZ/p\{\alpha\}\ \ for\ t\ge 2.
\]
\end{lemma}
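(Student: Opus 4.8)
The plan is to show that $\alpha$ defines a nonzero, hence order-$p$, element of $DH^s(X;A)$ for $A=\bZ_{(p)}$ and for $A=\bZ/p^t$ with $t\ge 2$; since $DH^s(X;A)$ is a quotient by $(p,\ldots)$ and is therefore killed by $p$, the cyclic subgroup generated by $\alpha$ is then automatically $\bZ/p\{\alpha\}$. So it suffices to prove $\alpha\notin pH^s(X;A)+\tilde N^1H^s(X;A)$, and I would argue by contradiction: assuming $\alpha=p\gamma+\tilde\alpha$ with $\tilde\alpha\in\tilde N^1H^s(X;A)$, I will deduce $Q_i(\alpha)=0\in H^*(X;\bZ/p)$, contradicting the hypothesis. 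Note that $\alpha$ indeed lies in the numerator $N^1H^s(X;A)$ in all these cases, since the coniveau filtration is compatible with reduction of coefficients.

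By definition of the strong coniveau filtration I write $\tilde\alpha=\sum_j (f_j)_*(b_j)$ with $f_j\colon Y_j\to X$ proper of relative codimension $r_j\ge 1$ and $b_j\in H^{s-2r_j}(Y_j;A)$. Reducing mod $p$ and using that reduction commutes with Gysin maps gives $\bar\alpha=\overline{p\gamma}+\sum_j (f_j)_*(\bar b_j)=\sum_j (f_j)_*(\bar b_j)$, because $p\gamma$ reduces to $0$ mod $p$. The crucial observation is that each $\bar b_j\in H^{s-2r_j}(Y_j;\bZ/p)$ is the reduction of a class defined over $A$, and for $A=\bZ_{(p)}$ or $A=\bZ/p^t$ with $t\ge 2$ such a class lifts to $\bZ/p^2$; hence $\beta(\bar b_j)=Q_0(\bar b_j)=0$. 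This is exactly the step that breaks down for $A=\bZ/p$, which is why the conclusion is not asserted there.

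Next I would run the degree count. Since $r_j\ge 1$ and $s\in\{3,4\}$, the source degree satisfies $s-2r_j\le 2$, so each $\bar b_j$ lies in $H^0$, $H^1$ or $H^2$ of $Y_j$. I then claim the vanishing statement: if $b\in H^d(Y;\bZ/p)$ with $d\le 2$ and $\beta(b)=0$, then $Q_i(b)=0$ for every $i\ge 1$. For $d=0$ this is clear. For $d=1$, instability of the reduced powers forces $P^{\Delta_i}(b)=0$, so $Q_i(b)=P^{\Delta_i}\beta(b)-\beta P^{\Delta_i}(b)=0$ using $\beta(b)=0$. For $d=2$, instability forces $P^{\Delta_i}(b)=c\,b^{p^i}$ for some $c\in\bZ/p$, since each admissible reduced power occurring in $P^{\Delta_i}$ acts on the $p$-th power classes $b^{p^m}\in H^{2p^m}$ either as $0$ or as the $p$-th power; hence $Q_i(b)=-\beta P^{\Delta_i}(b)=-c\,\beta(b^{p^i})=0$, because $\beta$ is a derivation and annihilates $p$-th powers. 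Applying Lemma 2.1, that $Q_i$ commutes with $f_*$, I conclude $Q_i(\bar\alpha)=\sum_j (f_j)_*Q_i(\bar b_j)=0$, which is the desired contradiction.

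The main obstacle is the degree-$2$ case relevant to $s=4$: one must use both that $\beta(\bar b_j)=0$, which is precisely why the coefficient group must admit a surjection onto $\bZ/p^2$, and that $P^{\Delta_i}$ carries a two-dimensional class into $p$-th powers so that the outer Bockstein vanishes. Once this vanishing lemma for $Q_i$ on classes of degree $\le 2$ with trivial Bockstein is established, the inequality $s-2r_j\le 2$ coming from $s\le 4$ and $r_j\ge 1$ does the rest, and the nonvanishing of $Q_i(\alpha)$ identifies $\alpha$ as a nonzero, necessarily order-$p$, element of $DH^s(X;A)$.
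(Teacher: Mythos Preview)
Your proof is correct and follows essentially the same route as the paper's: assume $\alpha$ lies in $pH^s+\tilde N^1H^s$, reduce mod $p$, use that the preimage classes $\bar b_j$ have degree $\le 2$ and satisfy $\beta(\bar b_j)=0$ (since they lift to $\bZ_{(p)}$ or $\bZ/p^t$, $t\ge 2$), deduce $Q_i(\bar b_j)=0$, and invoke the commutativity of $Q_i$ with $f_*$ (Lemma~2.1) to reach the contradiction $Q_i(\alpha)=0$. Your write-up is in fact slightly more careful than the paper's in two respects: you explicitly treat the sum $\sum_j (f_j)_*(b_j)$ rather than a single transfer, and you explicitly dispose of the $p\gamma$ term. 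One small remark: for $|b|=2$ the identity $P^{\Delta_i}(b)=b^{p^i}$ holds on the nose (with $c=1$), as the paper states; this is Milnor's formula for the coaction on a two-dimensional class, so your ``some $c$'' is unnecessarily cautious but of course not wrong.
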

\begin{proof}
Suppose $\alpha\in \tilde N^1H^s(X)$ for $s=3$ or $4$, i.e.
there is  a smooth $Y$ with $f:Y\to X$ such that
the transfer $f_*(\alpha')=\alpha$ for $\alpha'\in H^*(Y)$.
Then for $s=4$
\[ Q_i(\alpha')=(P^{\Delta_i}\beta-\beta P^{\Delta_i})(\alpha')=(-\beta P^{\Delta_i})(\alpha') 
 = -\beta (\alpha')^{p^i}\]
\[ =-p^i(\beta \alpha')(\alpha')^{p^i-1}=0 \quad (by\ Cartan formula)\]
since $\beta(\alpha')=0$ and $P^{\Delta_i}(y)=y^{p^i}$ for $dim(y)=2$.
(For $s=3$, we get also $Q_i(\alpha')=0$ since  $ P^{\Delta_i}(x)=0$ for $dim(x)=1$.)
This contradicts to the commutativity of $Q_i$ and $f_*$.
 
The case $A=\bZ/p^t$, $t\ge 2$ is proved similarly,  since 
for $\alpha'\in H^*(X;A)$ we see $\beta \alpha'=0\in H^*(X;\bZ/p)$.
\end{proof}

We  will extend Lemma 4.1, $s=4$, for the larger $s$, by using 
$MU$-theory of Eilenberg-MacLane spaces.

Recall that $K=K(\bZ,n)$ is the Eilenberg-MacLane space
such that the homotopy group $[X,K]\cong H^n(X:\bZ)$, i.e., each element 
$x\in H^n(X;\bZ)$ is represented by a homotopy map 
$x; X\to K$. Let $\eta_n\in H^n(K;\bZ)$ 
corrsponding the identity map.
(For $K'=K(\bZ/p,n)$ define $\eta_n'\in H^n(K';\bZ/p)$ by the identity  element of $K'$.)
We know the image $\rho(MU^*(K))\subset H^*(K;\bZ)/p$.

\begin{lemma} (\cite{Ta}, \cite{Ra-Wi-Ya})
We have the isomorphism 
\[  \rho :  MU^*(K)\otimes _{MU^*}\bZ/{p}
\cong\bZ/{p}[Q_{i_1}...Q_{i_{n-2}}\eta_n
|0<i_1<...<i_{n-2}],\]
\[  \rho :   MU^*(K')\otimes _{MU^*}\bZ/{p}
\cong\bZ/{p}[Q_{i_1}...Q_{i_{n-1}}Q_0\eta_{n}'
|0<i_1<...<i_{n-1}] , \]
where the notation $\bZ/p[a,...]$ exactly means $\bZ/p[a,...]/(a^2|\ |a|=odd)$. 
\end{lemma}

The following lemma is an extension of Lemma 4.1 for $s\ge 4$.
\begin{lemma}
Let $\alpha\in N^cH^{n+2c}(X)$, $n\ge 2$, $c\ge 1$. 
Suppose that  there is a sequence $0<i_1<...<i_{n-1}$ with 
\[ Q_{i_1}...Q_{i_{n-1}}\alpha \not =0 \quad   in\ H^*(X;\bZ/p).\]
Then $D^cH^{*}(X)=N^cH^{*}(X)/(p,\tilde N^cH^{*}(X)) \supset \bZ/p\{\alpha\}$.
\end{lemma}
\begin{proof}
Suppose $\alpha\in \tilde N^cH^{n+2c}(X)$,  i.e.
there is  a smooth $Y$ of $dim(Y)=dim(X)-c$ with $f:Y\to X$ 
such that 
the transfer $f_*(\alpha')=\alpha$ for $\alpha'\in H^n(Y)$.

Identify  the map $\alpha': Y\to K$ with $\alpha'=(\alpha')^*\eta_n.$
We still see from Lemma 4.2,
   \[Q(\alpha')= Q_{i_1}...Q_{i_{n-2}}((\alpha')^*\eta_n)\in 
Im(MU^*(Y)\to H^*(Y;\bZ/p)).\]
From  Lemma 3.4,  we see
\[Q_{i_{n-1}}Q(\alpha')=Q_{i_{n-1}}Q_{i_1}...Q_{i_{n-2}}(\alpha')=0 \in H^*(Y;\bZ/p).\]  

Therefore $Q_{i_{n-1}}Q(\alpha)$
must be zero by the commutativity of $f_*$ and $Q_i$.
\end{proof}

   {\bf Remark.}  For $\alpha\in N^cH^{n+2c}(X;\bZ/p)$, we can
give the $A=\bZ/p$ version  of the above lemma from the second isomorphism in Lemma 4.2. But  we can see
$Q_{i_1}...Q_{i_n}Q_0\alpha=0$ always, (even if $Q_{i_1}...Q_{i_{n-1}}\alpha\not=0$).  Hence we do not say about 
$D^cH^*(X;\bZ/p)$. 

\section{classifying spaces for finite groups }

Let $G$ be a finite group or an algebraic group, and $BG$ its classifying space.
For example, when $G=G_m$ is the multiple group, we see
\[ BG_m=BS^1\cong \bP^{\infty}, \quad 
H^*(\bP^{\infty})\cong \bZ[y]\ \  with\  |y=c_1|=2,\]
for the infinite (complex) projective space $\bP^{\infty}$.  Note that $BG_m$ is a colimit of complex projective spaces.
Though $BG$ itself is not a colimit of complex projective varieties, we can take a  complex projective variety
$X(N)$ (\cite{Ek}) for a given $N\ge 3$ such that there is a map $ j:X(N)\to BG\times \bP^{\infty} $  with
\[ H^*(BG\times \bP^{\infty};A)\stackrel{j^*}{\cong} H^*(X(N);A)\quad for \ 
*<N.\]

Note that the quotient
\[ N^nH^*(X;A)/(\tilde N^nH^*(X;A)) \]
 is invariant under replacing 
$X$ with $X\times \bP^m$ for all $n$ and all abelian groups $A$.  In fact, from K$\ddot{u}$nneth formula,
\[ H^*(X\times \bP^m; A)\cong H^*(X;A)\otimes \bZ[y]/(y^{m+1}),\]
where $y\in CH^1(\bP^m)$. Let  $Ideal(y)$ be the ideal of
$H^*(X\times \bP^m;A)$ generated by $y$. Then  $Ideal(y)\subset  \tilde N^*H^*(X\times \bP^{m};A)$ by the Frobenius reciprocity law  (Lemma 3.5). 
Moreover Benoist and Ottem  show the above quotient
when $n=1$ is a stable birational invariant (Proposition 2.4 in \cite{Be-Ot}).

In this paper, we say the above $X(N)$ as a ($degree\le N$) complex  projective approximation $for$ $BG$ (which  is an approximation $of$ $BG\times \bP^{\infty}$ strictly speaking), and
let us write the ($mod(p)$) stable rational invariant  by 
\[DH^*(X;A)=N^1H^*(X;A)/(p,\tilde N^1H^*(X;A)).\]

Hereafter, we consider $DH^*(X;A)$ when $A=\bZ$.
Let $p$ be an odd prime. ( The case $p=2$ is different but  a similar argument works.)
Let $G=(\bZ/p)^3$ the $rank=3$ elementary abelian $p$-group.
Then the $mod(p)$ cohomology is 
 \[H^*(BG;\bZ/p)\cong H^*(B\bZ/p;\bZ/p)^{3\otimes }\cong \bZ/p[y_1,y_2,y_3]\otimes \Lambda(x_1,x_2,x_3).\]
Here $|y_i|=2, |x_i|=1, \beta(x_i)=y_i$, and $\Lambda(a,...,b)$ is the $\bZ/p$-exterior algebra generated by $a,...,b$.

The integral cohomology 
(modulo $p$) 
is isomorphic to
\[ H^*(BG)/p\cong Ker(Q_0)\cong H(H^*(BG;\bZ/p);Q_0)\oplus
Im(Q_0)\]
where $H(-;Q_0)=Ker(Q_0)/Im(Q_0)$ is the homology with the differential $Q_0$.
It is immediate that  $H(H^*(B\bZ/p;\bZ/p); Q_0)\cong \bZ/p$.
By the K$\ddot{u}$nneth formula, we have 
$H(H^*((BG;\bZ/p);Q_0)\cong ( \bZ/p)^{3\otimes}\cong \bZ/p$.
Hence we have 
\[ H^*(BG)/p\cong \bZ/p\{1\}\oplus Im(Q_0)\]\[ \cong \bZ/p[y_1,y_2,y_3](1,Q_0(x_ix_j),
Q_0(x_1x_2x_3)|i<j)\]
where the notation $R(a,...,b)$ (resp. $R\{a,..., b\}$)
means the $R$-submodule (resp. the free $R$-module)
generated by $a,...,b$. Here we note $H^+(BG)$ is just $p$-torsion.

Also note that $y_1,y_2,y_3$ are represented by the Chern classes $c_1$.  From Lemma 3.6, we see
\[  Ideal(y_1,y_2,y_3)=0\in DH^*(X).\]

We know $Q_i(y_j)=y_j^{p^i}$ and $Q_j$ is a derivation.  Let us write 
\[ \alpha=Q_0(x_1x_2x_3)=y_1x_2x_3-y_2x_1x_3+y_3x_1x_2.\]
Note $\alpha\in H^4(X)$, $p\alpha=0$,  and $\alpha\in 
N^1H^*(X)$ from Lemma 3.7.  Moreover
 \[Q_1(\alpha)=Q_1(y_1x_2x_3)-...=y_1y_2^px_3-y_1y_3^px_2-...\not =0 \in H^*(X;\bZ/p).\]
Similarly, for $\alpha_{ij}=Q_0(x_ix_{j})$, we see $Q_1(\alpha_{ij})\not =0$.
Hence from Lemma 4.1 and Lemma 3.6, we have 
\begin{thm}  Let $X=X(N)$ with  $N> 2p+3$ be an approximation for $B(\bZ/p)^3$.   Then we have 
\[DH^*(X)\cong 
\bZ/p\{\alpha_{ij}, \alpha|1\le i<j\le 3\}\quad for\ *<N.\]
\end{thm}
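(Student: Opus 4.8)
The plan is to trap $DH^*(X)$ between a lower and an upper bound that coincide. I first record the mod-$p$ module structure: since $H^*(\bP^{\infty};\bZ)=\bZ[u]$ is torsion-free with $|u|=2$, the Künneth theorem gives, in the range $*<N$,
\[ H^*(X)/p\ \cong\ (H^*(BG)/p)\otimes \bZ/p[u]\ \cong\ \bZ/p[y_1,y_2,y_3,u](1,\alpha_{ij},\alpha). \]
In $H^*(X;\bZ_{(p)})$ the torsion-free part is exactly $\bZ_{(p)}[u]$, while every other positive-degree class is $p$-torsion, hence lies in $N^1$ by Lemma 3.5; together with the algebraic powers $u^k$ this already shows $N^1H^{>0}(X)=H^{>0}(X)$.

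For the upper bound I would argue that every class of $N^1H^{>0}(X)$ is, modulo $(p,\tilde N^1)$, a $\bZ/p$-combination of the $\alpha_{ij}$ and $\alpha$. The key point is that the ideal $J=(y_1,y_2,y_3,u)$ lies in $\tilde N^1H^*(X)$: the classes $y_i$ and $u$ are algebraic, lying in $CH^1(X)=H^{2,1}(X)$, so $y_i,u\in\tilde N^1$ by Lemma 3.3, and all their multiples $y_iz,\,uz$ lie in $\tilde N^1$ by the Frobenius reciprocity of Lemma 3.4. Each of $1,\alpha_{ij},\alpha$ survives in the quotient because it is a $\bZ/p[y,u]$-generator lying outside $J$ (for instance $\alpha_{ij}=y_ix_j-y_jx_i$ is not a $y$-multiple inside $\Ker Q_0$, since $x_j\notin\Ker Q_0$), so
\[ (H^*(X)/p)/J\ \cong\ \bZ/p\{1,\alpha_{ij},\alpha\}. \]
Dropping the degree-zero class $1$ shows $DH^{>0}(X)$ is a quotient of $\bZ/p\{\alpha_{ij},\alpha\}$ and in particular vanishes outside degrees $3$ and $4$.

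For the lower bound I would invoke Lemma 5.1 (equivalently Corollary 5.2, applied to $a=x_ix_j$ and $a=x_1x_2x_3$). By Lemma 3.5 the torsion classes $\alpha_{ij}\in H^3(X)$ and $\alpha\in H^4(X)$ lie in $N^1$, and computing $Q_1$ as a derivation with $Q_1(x_i)=y_i^p$ and $Q_1(y_i)=0$ gives
\[ Q_1(\alpha_{ij})=y_iy_j^p-y_jy_i^p,\qquad Q_1(\alpha)=y_1y_2^px_3-y_1y_3^px_2-\cdots, \]
all nonzero in $H^*(X;\bZ/p)$, so by Lemma 5.1 none of these classes lies in $(p,\tilde N^1)$. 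To promote this to independence in degree $3$, I would observe that $Q_1(\alpha_{12}),Q_1(\alpha_{13}),Q_1(\alpha_{23})$ involve pairwise disjoint sets of monomials and are therefore $\bZ/p$-linearly independent; hence for any nonzero $(c_{ij})$ the class $\sum c_{ij}\alpha_{ij}$ satisfies $Q_1(\sum c_{ij}\alpha_{ij})\neq 0$ and, by the argument of Lemma 5.1, cannot lie in $\tilde N^1$ modulo $p$. Thus $\alpha_{12},\alpha_{13},\alpha_{23},\alpha$ are linearly independent in $DH^{>0}(X)$, which together with the upper bound forces $DH^*(X)\cong\bZ/p\{\alpha_{ij},\alpha\}$ for $*<N$.

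I expect the main obstacle to be the simultaneous control of the two filtrations in the upper bound: one must verify that the algebraic ideal $J\subseteq\tilde N^1$ already swallows all of positive-degree cohomology except the four detected classes, and that passing to the quotient by $\tilde N^1$ introduces no further identifications among $\alpha_{12},\alpha_{13},\alpha_{23},\alpha$. The hypothesis $N>2p+3$ enters precisely at the detection step: it guarantees that $Q_1(\alpha)\in H^{2p+3}(X)$ and $Q_1(\alpha_{ij})\in H^{2p+2}(X)$ remain genuine nonzero classes inside the isomorphism range $*<N$, which is exactly what makes the $Q_1$-obstruction of Lemma 5.1 applicable.
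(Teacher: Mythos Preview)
Your argument is correct and follows exactly the paper's approach: the upper bound comes from $H^*(BG)/(p,y_1,y_2,y_3)\cong\bZ/p\{1,\alpha_{ij},\alpha\}$ together with Lemma~3.3/3.4 (Chern classes and Frobenius reciprocity), and the lower bound from the $Q_1$-obstruction of the main lemma (what you call Lemma~5.1 is the paper's Lemma~4.1). You have in fact supplied more detail than the paper does---in particular the explicit check that nontrivial linear combinations $\sum c_{ij}\alpha_{ij}$ still have $Q_1\neq 0$, which the paper leaves implicit.
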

\begin{proof}
We see $H^*(BG)/(p,y_1,y_2,y_3)\cong \bZ/p\{1,\alpha_{ij},\alpha\}$. Of course $1\not \in N^1H^*(X)$, we have the theorem from Lemma 4.1.
\end{proof} 
\begin{thm}  Let $X=X(N)_n$ be an approximation for $(B\bZ/p)^n$ with $N>|Q_0Q_1...Q_{n-1}(x_1...x_n)|$.
 Then
we have  for $\alpha_{i_1,...,i_s}=Q_0(x_{i_1}...x_{i_s})$,
\[DH^*(X)\supset 
\bZ/p\{(\alpha_{i_1,...,i_s }|2\le s,\  0<i_1<i_2...<i_s\le n\}\quad for\ *<N.
\]
\end{thm}
\begin{proof}  We have the theorem from Lemma 4.3 and
$ Q_{i_1}...Q_{i_{s-2}}(\alpha_{i_1,...,i_s})$ is \[ Q_{i_1}...Q_{i_{s-2}}Q_0(x_{i_1}...x_{i_s})
=y_{i_1}^{p^{i_1}}...y_{i_{s-2}}^{p^{i_{s-2}}}
y_{i_{s-1}}x_{i_s}+...\not =0.\]
(Note the $n=|\alpha'|$ in Lemma 4.3 is written by $s-1$ here.)
\end{proof}

\begin{cor}  If $n\not =m\ge 3$, then $X(N)_n$ and $X(N)_m$
are not stable birational equivalent.
\end{cor}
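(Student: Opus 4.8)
The plan is to exploit that $DH^*(-;\bZ_{(p)})$ is a stable birational invariant (Proposition 2.4 in [Be-Ot]): if $X(N)_n$ and $X(N)_m$ were stable birational equivalent, then there would be a graded isomorphism $DH^*(X(N)_n)\cong DH^*(X(N)_m)$, and in particular the dimensions $\dim_{\bZ/p}DH^s$ would agree in every degree $s<N$. I would therefore single out one degree in which these dimensions are forced to differ; the degree $s=3$ already does the job.

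The key step is to compute $\dim_{\bZ/p}DH^3(X(N)_n)$ exactly. For the lower bound I invoke Corollary 5.3: the classes $\alpha_{ij}=Q_0(x_ix_j)$, $1\le i<j\le n$, furnish $\binom{n}{2}$ linearly independent elements of $DH^3(X(N)_n)$. For the matching upper bound I observe that $DH^3(X(N)_n)$ is a subquotient of $H^3(X(N)_n)/p\cong H^3(BG)/p$ in the range $*<N$, so it suffices to check $\dim_{\bZ/p}H^3(BG)/p=\binom{n}{2}$. This follows from the $Q_0$-homology description: since $H(H^*(B(\bZ/p)^n;\bZ/p);Q_0)\cong\bZ/p$ is concentrated in degree $0$, one has $H^*(BG)/p\cong\bZ/p\{1\}\oplus\Img(Q_0)$, and in degree $3$ this reads
\[ (H^*(BG)/p)^3=Q_0(H^2(BG;\bZ/p))=\bZ/p\{\alpha_{ij}\mid 1\le i<j\le n\}, \]
because $Q_0(y_i)=0$ while $Q_0(x_ix_j)=y_ix_j-y_jx_i$ and these are independent. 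Combining the two bounds gives $\dim_{\bZ/p}DH^3(X(N)_n)=\binom{n}{2}$.

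It then remains only to note that $\binom{n}{2}$ is strictly increasing in $n$, so $n\neq m$ forces $\binom{n}{2}\neq\binom{m}{2}$; hence $DH^3(X(N)_n)$ and $DH^3(X(N)_m)$ have different $\bZ/p$-dimensions and cannot be isomorphic, contradicting the stable birational invariance. This gives the corollary (and in fact distinguishes $X(N)_n$ from $X(N)_m$ for all $n\neq m$, the restriction $n,m\ge 3$ only aligning the statement with Corollary 5.3).

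The step I expect to require the most care is the sharpness of the count in the middle paragraph, namely matching the lower bound of Corollary 5.3 with the upper bound. The lower bound only asserts that the $\alpha_{ij}$ survive in $DH^3$; sharpness additionally needs that no other torsion classes contribute, which is exactly where the structural facts---that $H^{>0}(BG)$ is $p$-torsion lying in $N^1$ by Lemma 3.6, and that the positive-degree part of $H^*(BG)/p$ is precisely $\Img(Q_0)$---are used to close the gap.
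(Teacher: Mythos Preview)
Your argument is correct and is essentially the approach the paper has in mind: the corollary is meant to follow from the exact computation of $DH^*$ in low degrees (as in Theorem 5.2, which for $n=3$ proceeds by noting $H^*(BG)/(p,y_1,y_2,y_3)\cong\bZ/p\{1,\alpha_{ij},\alpha\}$) together with the stable birational invariance of $DH^*$. You have made explicit the one point the paper leaves implicit, namely that Corollary 5.3 as stated only gives the containment $\supset$, whereas distinguishing $X(N)_n$ from $X(N)_m$ requires the matching upper bound; your observation that $(H^*(BG)/p)^3=\bZ/p\{\alpha_{ij}\}$ supplies exactly this, by the same mechanism used in the proof of Theorem 5.2.
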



Next we study small non abelian $p$-groups.
Let $G$ be a non abelian group of order $p^3$ (see $\S 8$,
for details).  Then  
$H^{even}(BG)$ is generated by Chern classes, and
$H^{odd}(BG)$ is a (just) $p$-torsion.  We can identify
$H^{odd}(BG)\subset H^{odd}(BG;\bZ/p)$.The operation $Q_1$
acts on $H^{odd}(X)$, and  induces the injection
\[    Q_1\ :\  H^{odd}(BG)\hookrightarrow    H^{even}(BG).\]

Such groups are four types (see $\S 8$ below), and they are called extraspecial $p$-groups $G=p_{\pm}^{1+2}$ of order $p^3$.
When $G=Q_8=2_-^{1+2}$ the quaternion group of order $8$, we know  $H^{odd}(X)=0$.
However when $G=D_8=2_+^{1+2}$ the dihedral group of order $8$, the cohomology $H^{odd}(BG)$ is generated as an $H^{even}(BG)$ module by an element $e$ of $deg(e)=3$.
When $G=E=p_+^{1+2}$ for $p\ge 3$, $H^{odd}(BG)$ is generated by
$e_1,e_2$ with  $deg(e_i)=3$.
When $G=M=p_-^{1+2}$ for $p\ge 3$, $H^{odd}(BG)$ is generated by $e'$ but  $deg(e')=2p+1$.

From Lemma 3.5 (Frobenius reciprocity) and the main lemma
(Lemma 4.1),
we have the following theorem. 

\begin{thm}  Let $X=X(N)$ with  $N>2p+3$ be an approximation for an extraspecial $p$-group $G$ of 
order $p^3$.  Then
we have for $*<N$
\[ DH^*(X)\cong 
\begin{cases} 0\quad for \ G=Q_8\\
           \bZ/2\{e\}\quad for \ G=D_8\\
               0\ or \ \bZ/p\{e'\} \quad for\ G=M \\
\bZ/p\{e_1,e_2\} \quad for \ G=E.
\end{cases} \]
\end{thm}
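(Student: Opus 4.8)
The plan is to feed the ring-theoretic description of $H^*(BG)$ recalled just above the statement into the filtration lemmas of Sections~3 and~4. Since $j^*\colon H^*(BG\times\bP^\infty;A)\cong H^*(X;A)$ for $*<N$, I would argue throughout with $H^*(BG)$ in the stable range $*<N$, splitting it as $H^{even}(BG)\oplus H^{odd}(BG)$. The even summand is generated by Chern classes, so every element of $H^{even,+}(X)$ lies in $Im(cl)^+$; by Lemma~3.3 such classes sit in $\tilde N^*H^*(X)\subset\tilde N^1H^*(X)$ and therefore vanish in $DH^*(X)$, while the degree-zero class $1$ lies outside $N^1H^*(X)$ and is likewise irrelevant. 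Thus only the odd part can contribute.

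For the odd part I would first note that $H^{odd}(BG)$ is $p$-torsion, so by the Colliot-Th\'el\`ene--Voisin result (Lemma~3.5) all of $H^{odd}(X)$ lies in $N^1H^*(X)$. Regarding $H^{odd}(X)$ as a module over $Im(cl)=H^{even}(X)$, the Frobenius-reciprocity computation of Lemma~3.4 shows that every product $c\cdot e$ with $c\in Im(cl)^+$ already lies in $\tilde N^1H^*(X)$. Hence $DH^*(X)$ is a quotient of the $\bZ/p$-span of the $H^{even}$-module generators of $H^{odd}(BG)$: nothing for $G=Q_8$ (as $H^{odd}=0$), a single degree-$3$ class $e$ for $G=D_8$, the two degree-$3$ classes $e_1,e_2$ for $G=E$, and a single degree-$(2p+1)$ class $e$ for $G=M$. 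This already yields $DH^*(X)=0$ for $Q_8$ together with the desired upper bounds in the other cases.

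It then remains to show that these generators actually survive. Here I would use the injectivity of $Q_1\colon H^{odd}(BG)\to H^{even}(BG)$ recalled above: each odd generator satisfies $Q_1(e)\neq 0$ (and $Q_1(e_i)\neq 0$), and, because $N>2p+3$, this nonvanishing persists in $H^*(X;\bZ/p)$. When the generator has degree~$3$, that is for $D_8$ and $E$, Lemma~4.1 applies directly with $s=3$ and gives $DH^3(X)\supset\bZ/p\{e\}$ (respectively $\bZ/p\{e_1,e_2\}$); combined with the upper bound this forces the equalities $\bZ/2\{e\}$ for $D_8$ and $\bZ/p\{e_1,e_2\}$ for $E$.

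The main obstacle is $G=M$, whose unique odd generator $e$ has degree $2p+1>4$. The proof of Lemma~4.1 uses $P^{\Delta_i}(y)=y^{p^i}$ for $\dim y=2$ and $P^{\Delta_i}(x)=0$ for $\dim x=1$, which force $Q_i(\alpha')=0$ for a transfer preimage $\alpha'$ only in the range $\deg\alpha'\le 4$; for a degree-$(2p+1)$ class no such vanishing is available, so the $Q_i$-obstruction cannot decide whether $e\in\tilde N^1H^*(X)$. This is exactly why the value for $M$ must be left as the disjunction $0$ or $\bZ/p\{e\}$, and resolving it would require an argument genuinely beyond the Milnor-operation method developed here.
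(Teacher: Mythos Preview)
Your proposal is correct and follows exactly the route the paper indicates: the paper's entire proof is the single sentence ``From Lemma~3.4 (Frobenius reciprocity) and the main lemma (Lemma~4.1), we have the following theorem,'' and you have spelled out precisely this argument. One small sharpening for the case $G=E$: to obtain the full two-dimensional space $\bZ/p\{e_1,e_2\}$ you should invoke the \emph{injectivity} of $Q_1$ on $H^{odd}$ (not merely $Q_1(e_i)\neq 0$ individually), so that any nonzero combination $a_1e_1+a_2e_2$ has $Q_1(a_1e_1+a_2e_2)\neq 0$ and hence, by the proof of Lemma~4.1, cannot lie in $\tilde N^1H^3(X)$.
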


In particular, the above theorem implies that when $G=p_+^{1+2}
$, there is an $X$ with $DH^3(X)\not =0$ but for all $X=X(N)$ we see 
$ DH^*(X)=0$ for  $4\le *<N.$

\section{connected groups}

At first, we consider when $G=U_n$, $SU_n$  or $Sp_{2n}$
for all $p$,  where 
the cohomology $H^*(BG)$ has no torsion.
Then $H^*(BG)$ is generated by Chern classes, e.g., 
\[H^*(BU_n)\cong CH^*(BU_n)\cong \bZ_{(p)}[c_1,...,c_n],
\]
\[ H^*(BSp_{2n})\cong CH^*(BSp_{2n})\cong 
\bZ_{(p)}[c_2,c_4,...,c_{2n}].\]
 Hence $DH^*(X)=0$
for the approximations $X$ for these groups.

Next we consider the case $G=SO_3$ and  $p=2$.  Then
\[ H^*(BG;\bZ/2)\cong \bZ/2[w_1,w_2,w_3]/(w_1)\cong \bZ/2[w_2,w_3],\] 
where $w_i$ is the $i$-th Stiefel-Whitney class for $SO_3\subset O_3$ and $w_i^2=c_i$
is the $i$-th Chern class for $SO_3\subset U_3$.
(Also it is the elementary symmetric polynomial in $\bZ/2[y_1,...,y_i]$.)
 Here we know 
$Q_0(w_2)=w_3,$ and $Q_1(w_3)=w_3^2=c_3$.  Therefore we have 
\cite{Ya1}
\[ H^*(BG;\bZ/2)\cong \bZ/2[c_2,c_3]\{1,w_2,w_3=Q_0(w_2),w_2w_3=Q_1w_2\}\]
\[\cong \bZ/2[c_2,c_3]\otimes \Lambda(Q_0,Q_1)\{w_2\}
\oplus \bZ/2[c_2].\]
In particular $H^*(BG)/2\cong Ker(Q_0)\cong \bZ/2[c_2,c_3]\{1,w_3\}. $
Then from Lemma 4.1, we have 
\begin{thm} Let $G=SO_3$ and $X$ be an approximation of $BG$
for $6<N$.  Then 
$DH^*(X)\cong\bZ/2\{w_3\}$ for $*<N.$
\end{thm}

Using Lemma 4.3, we have

\begin{thm} Let $X_n=X_n(N)$ be approximations for $BSO_n$  for $n\ge 3$.  Moreover let $|Q_1...Q_{2m-1}(w_{2m+1})|<N$.Then
\[  DH^*(X_{2m+1})\supset  \bZ/2\{w_3,w_5,...,w_{2m+1}\}
\quad for \ *<N.\]
\end{thm}
\begin{proof}
Since $Q_0w_{2i}=w_{2i+1}$, we see $w_{2i+1}\in N^1H^{2i+1}(X)$.
We have the theorem, 
 from Lemma 4.3 and the restriction to $H^*(B(\bZ/2)^{2i};\bZ/2),$
\[ Q_1...Q_{2i-2}(w_{2i+1})=Q_1...Q_{2i-2}Q_0(w_{2i})
=y_1y_2^2...y_{2i-1}^{2^{2i-2}}x_{2i}+...\not =0.\]
\end{proof}

We next consider simply connected groups. Let us write by $X$ an 
approximation of $BG_2$ for the exceptional simple 
group $G_2$ of $rank=2$.
The $mod(2)$ cohomology is generated by the Stiefel-Whitney classes $w_i$ of
the real representation $G_2\to SO_7$
\[ H^*(BG_2;\bZ/2)\cong \bZ/2[w_4,w_6,w_7],
\quad P^1(w_4)=w_6,\ Q_0(w_6)=w_7, \]
\[H^*(BG_2)\cong (D'\oplus D'/2[w_7]^+)\quad where \ \ D'=\bZ[w_4,c_6].\]
Then we have  $Q_1w_4=w_7, Q_2(w_7)=w_7^2=c_7$ (the Chern class).

The Chow ring of $BG_2$ is also known
\[ CH^*(BG_2)\cong (D\{1,2w_4\} \oplus D/2[c_7]^+)\quad where \ \ D=\bZ[c_4,c_6]\ \ c_i=w_i^2.\]
In particular the cycle map $cl: CH^*(BG)\to H^*(BG)$ is injective.

It is known $w_4\in N^1H^*(X;\bZ/2)$ (\cite{Ya1})
and from Lemma 3.2, we see $w_4\in N^1H^*(X)$.
Since $Q_1(w_4)=w_7\not =0$,  from Lemma 4.1,
we have
$DH^4(X)\not =0$ (\cite{Be-Ot}).  Moreover
 $H^*(BG)/(c_4,c_6,c_7)\cong 
\Lambda(w_4,w_7)$ implies 
\begin{prop} For  $X$ an approximation for $BG_2$,
we have the surjection
\[ \Lambda(w_4,w_7)^+
\twoheadrightarrow 
DH^*(X)\quad for\ *<N.\]
\end{prop}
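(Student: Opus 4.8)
The plan is to define the evident map $\Lambda(w_4,w_7)^+\surj DH^*(X)$ sending the three generators to the classes of $w_4,w_7,w_4w_7$, and to prove it is surjective by reducing an arbitrary element of $N^1$ modulo $\tilde N^1$. First I must check the map is well defined, i.e. that $w_4,w_7,w_4w_7\in N^1H^*(X)$: the class $w_4\in N^1H^*(X)$ is known by [Ya1] together with Lemma 3.2, while $w_7$ and $w_4w_7$ are $2$-torsion (recall $2w_7=0$) and hence lie in $N^1H^*(X)$ by the Colliot-Th\'el\`ene--Voisin result (Lemma 3.5). Thus each of $w_4,w_7,w_4w_7$ determines a class in $DH^*(X)=N^1H^*(X)/(2,\tilde N^1H^*(X))$. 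Since $DH^*$ is invariant under $\times\bP^m$ (the class $y\in CH^1$ and all its multiples lie in $\tilde N^1$ by Frobenius reciprocity), I may carry out the computation with $H^*(BG_2)$ itself under the approximation isomorphism. The whole problem then reduces to showing that every homogeneous $\xi\in N^1H^*(BG_2)$ is, modulo $(2,\tilde N^1)$, a $\bZ/2$-combination of $w_4,w_7,w_4w_7$.

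The key input is that the Chern classes $c_4=w_4^2,\ c_6=w_6^2,\ c_7=w_7^2$ are algebraic, i.e. lie in $\Img(cl)=CH^*(BG_2)$, so that $\Img(cl)^+\subset\tilde N^1$ by Lemma 3.3 and, by the projection formula (Frobenius reciprocity, Lemma 3.4), also $\Img(cl)^+\cdot H^*(BG_2)\subset\tilde N^1$. I then decompose $\xi$ according to the given splitting $H^*(BG_2)=D'\oplus (D'/2)[w_7]^+$ with $D'=\bZ_{(2)}[w_4,c_6]$, expressing the torsion part in the $\bZ/2$-basis $\{w_4^ic_6^jw_7^k:\ i,j\ge0,\ k\ge1\}$. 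Any basis monomial with $i\ge2$, or $j\ge1$, or $k\ge2$ contains one of the algebraic factors $c_4,c_6,c_7$, hence lies in $\Img(cl)^+\cdot H^*$ and dies in $DH^*$; the only survivors are $w_7$ and $w_4w_7$. For the free part I use the $D=\bZ_{(2)}[c_4,c_6]$-module decomposition $D'=D\oplus D\,w_4$: every term falls into $D^+\subset\Img(cl)^+$ or into $D^+w_4\subset\Img(cl)^+\cdot H^*$, except a single degree-$4$ term $q\,w_4$ with $q\in\bZ_{(2)}$, which modulo $2$ leaves $0$ or $w_4$. Collecting the surviving terms gives $[\xi]\in\bZ/2\{[w_4],[w_7],[w_4w_7]\}$, which is exactly the asserted surjectivity.

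The point requiring care, and where this case differs from the purely $p$-torsion elementary-abelian examples, is the free/torsion interaction around $w_4$: the generator $w_4$ has infinite order in $H^*(BG_2)$ and is \emph{not} algebraic (so it survives in $DH^*$), whereas both its square $c_4=w_4^2$ and its double $2w_4$ lie in $CH^*(BG_2)=\Img(cl)$. It is the algebraicity of $2w_4$ that forces $2[w_4]=0$ in $DH^*$, so the target is genuinely a $\bZ/2$-vector space on at most $w_4,w_7,w_4w_7$, and that lets me absorb each even multiple $q\,w_4$ into $2N^1+\tilde N^1$. I expect the main obstacle to be precisely this bookkeeping: one must keep the non-algebraic generator $w_4$ apart from its algebraic relatives $c_4$ and $2w_4$, and verify that no further monomial $w_4^ic_6^jw_7^k$ escapes both the $2$-adic and the strong-coniveau filtration. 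Once this is checked the surjection is immediate, and together with the fact that $Q_1w_4=w_7\neq0$ (which via Lemma 4.1 already gives $DH^4(X)\neq0$) it identifies the generators of the quotient.
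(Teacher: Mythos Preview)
Your proof is correct and follows essentially the same route as the paper's. The paper's proof is a single line: ``We have seen $H^*(BG)/(c_4,c_6,c_7)\cong \Lambda(w_4,w_7)$,'' which is exactly your observation that the Chern classes $c_4,c_6,c_7\in\Img(cl)$ generate (via Frobenius reciprocity, Lemma 3.4) an ideal contained in $\tilde N^1$, so that $DH^*(X)$ is a quotient of $\Lambda(w_4,w_7)^+$. Your version simply fills in the details the paper leaves implicit---in particular the verification that $w_4,w_7,w_4w_7\in N^1H^*(X)$ (so the map is defined) and the careful handling of the torsion-free generator $w_4$ via $2w_4\in CH^*(BG_2)$. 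One small remark: since $DH^*$ is by definition already reduced mod $2$, the algebraicity of $2w_4$ is not strictly needed for the surjectivity (any even multiple of $w_4$ dies automatically in $N^1/(2,\tilde N^1)$ once you know $w_4\in N^1$); it is, however, a correct and natural way to see the reduction.
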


By Voevodsky \cite{Vo1}, \cite{Vo2}, we have  the $Q_i$ operation also in the 
motivic cohomology $H^{*,*'}(X;\bZ/p)$ with
$deg(Q_i)=(2p^i-1,p-1)$.  Then we can take 
\[ deg(w_4)=(4,3),\ deg(w_6)=(6,4),\ deg(w_7)=(7,4),\ deg(c_7)=(14,7).\]
By Theorem 3.1, the above means
\[ w_7=Q_1w_4\in N^{7-4}H^*(X;\bZ/2)=N^{3}H^*(X;\bZ/2).\]
We can not see here that $0\not =w_7\in DH^*(X)$,
but see the following proposition.

\begin{prop}  For $X$ the approximation for  $BG_2$, we have
\[\bZ/2\{w_7\}\subset D^3H^*(X)=N^3H^*(X)/(2,\tilde N^3H^*(X)).\]
\end{prop}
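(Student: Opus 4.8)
The plan is to prove the two assertions hidden in the statement separately: that the $2$-torsion integral class $w_7=\tilde\beta(w_6)\in H^7(X)$ lies in $N^3H^7(X)$, and that it represents a nonzero class in $N^3H^*(X)/(2,\tilde N^3H^*(X))$. The first is a lifting problem and the second is where the Milnor operation $Q_2$ does the work.

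First I would record, as already noted just above, that $w_7\in N^3H^7(X;\bZ/2)$: since $\deg(w_7)=(7,4)$ in motivic cohomology, Theorem 3.1 identifies $N^3H^7(X;\bZ/2)$ with the image of $\times\tau^3\colon H^{7,4}(X;\bZ/2)\to H^{7,7}(X;\bZ/2)$, which contains $w_7$. To promote this to the integral coniveau I would apply Lemma 3.2 with $c=3$ to $x=w_7\in H^{7,7}(X)$, whose mod $2$ reduction is the class just discussed. The hypothesis required is injectivity of $H^{8,4}(X)\to H^{8,7}(X)$; here $H^{8,4}(X)\cong CH^4(X)$ is torsion free (in the Chow ring of $BG_2$ it is generated by $c_4$ and $2w_4$), and confirming that this weight-raising map is injective is the technical crux of this half.

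For the nonvanishing, suppose for contradiction that $w_7\in(2,\tilde N^3H^7(X))$. Reducing modulo $2$ kills the multiple of $2$ and forces $w_7=f_*(\alpha')$ in $H^7(X;\bZ/2)$ for a proper morphism $f\colon Y\to X$ from a smooth $Y$ of codimension $r\ge 3$, where $\alpha'$ is the mod $2$ reduction of an integral class in $H^{7-2r}(Y)$. Since $7-2r$ must be nonnegative and is odd, the only possibility is $r=3$ and $\deg(\alpha')=1$; in particular $\beta(\alpha')=0$. This is exactly the degree-one situation treated in the $s=3$ case of Lemma 4.1, so $Q_2(\alpha')=0$ (as $P^{\Delta_2}$ annihilates a degree-one class and $\beta(\alpha')=0$). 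By Lemma 2.1 the operation $Q_2$ commutes with the transfer, whence $Q_2(w_7)=f_*Q_2(\alpha')=0$, contradicting $Q_2(w_7)=w_7^2=c_7\neq 0$.

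The point that makes $Q_2$ the right operation is that $w_7=Q_0(w_6)=Q_1(w_4)$ forces $Q_0(w_7)=Q_1(w_7)=0$ (by $Q_i^2=0$), while $Q_2(w_7)=c_7\neq 0$; since the filtration level $3$ together with the degree $7$ pin the transfer variable into degree one, where every $Q_i$ with $i\ge1$ vanishes on an integral class, the operation $Q_2$ both annihilates the source and detects $w_7$. I expect the genuine obstacle to be the membership half: verifying the injectivity needed to run Lemma 3.2 over $\bZ_{(2)}$, i.e. controlling the integral motivic cohomology of the $BG_2$-approximation in bidegrees $(8,4)$ and $(8,7)$. The nonvanishing half is then a direct transcription of Lemma 4.1 once $Q_2$ has been selected.
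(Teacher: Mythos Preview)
Your nonvanishing argument is essentially identical to the paper's proof: assume $w_7=f_*(\alpha')$ with $\alpha'\in H^1(Y)$, observe $Q_2(\alpha')=0$ because $\beta(\alpha')=0$ and $P^{\Delta_2}$ kills degree-one classes, then contradict $Q_2(w_7)=c_7\neq 0$ via Lemma~2.1. The paper's written proof in fact records only this half.

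On membership $w_7\in N^3H^7(X)$ you are more careful than the paper, which establishes only the $\bZ/2$ statement in the text preceding the proposition and does not explicitly lift it. Your route through Lemma~3.2 is correct, and the injectivity you flag as the technical crux is in fact immediate from what the paper already states: the composite $H^{8,4}(X)\to H^{8,7}(X)\to H^{8,8}(X)\cong H^8(X)$ is the cycle map $CH^4(X)\to H^8(X)$, and the paper has noted that $cl\colon CH^*(BG_2)\to H^*(BG_2)$ is injective. So the first arrow is injective and Lemma~3.2 applies. One small correction: $CH^4(BG_2)\cong\bZ_{(2)}\{c_4\}$ alone; the class $2w_4$ lies in $CH^2$, not $CH^4$. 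This does not affect your argument, since injectivity of the cycle map is all you need.
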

\begin{proof}
Suppose $w_7\in \tilde N^3H^*(X)$.  That is, there is
$x\in H^1(Y)$ with $f_*(x)=w_7$ for $f:Y\to X$.
Act $Q_2$ on $H^*(Y;\bZ/2)$, and
\[ Q_2(x)=(P^{\Delta_2}\beta +\beta P^{\Delta_2})(x)=0\]
since $\beta(x)=0$ and $P^{i}(x)=Sq^{2i}(x)=0$ for $i>0$. 
But $Q_2w_7=c_7\not =0$.
This contradicts to the commutativity of $f_*$ and $Q_2$. 
\end{proof}

\begin{thm}  Let $G$ be a simply connected group such that $H^*(BG)$ has $p$-torsion.
Let $X=X(N)$ be an approximation for $BG$ for 
$N\ge 2p+3$. Then
      $ DH^4(X)\not =0$.
\end{thm}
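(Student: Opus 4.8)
The plan is to produce a single integral degree-four class on $X$ and feed it into Lemma 4.1 with $s=4$. First I would reduce to the case that $G$ is simple. A simply connected compact group splits as a product $G_1\times\cdots\times G_k$ of simple simply connected factors; by the K\"unneth formula the $p$-torsion of $H^*(BG;\bZ)$ forces $p$-torsion in some $H^*(BG_j)$, and since each factor is $3$-connected the degree-four part of $H^*(BG;\bZ/p)$ is the direct sum of those of the factors, compatibly with the Milnor operations. Taking the generator $y_4$ of the summand $H^4(BG_j;\bZ_{(p)})\cong\bZ_{(p)}\{y_4\}$ attached to a $p$-torsion factor, and pulling it back along the projection $BG\to BG_j$, it suffices to treat a simple simply connected $G$ with $H^i(BG;\bZ)=0$ for $1\le i\le 3$ and $H^4(BG;\bZ_{(p)})\cong\bZ_{(p)}\{y_4\}$; the target class is the reduction $\bar y_4$.

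The topological input is that $p$-torsion forces $Q_i(\bar y_4)\neq0\in H^*(BG;\bZ/p)$ for some $i\ge1$. I would read this off from transgression in $G\to EG\to BG$: the primitive generator $x_3\in H^3(G;\bZ/p)$ transgresses to $y_4$, and since the stable operation $Q_i$ commutes with transgression one has $Q_i(y_4)=\tau(Q_i x_3)$. The classical fact that $H^*(BG;\bZ)$ has $p$-torsion exactly when $H^*(G;\bZ/p)$ fails to be an exterior algebra, the failure being realized by a nonzero Milnor operation on the degree-three generator, then gives $Q_i(x_3)\neq0$ and hence $Q_i(\bar y_4)\neq0$; in the cases at hand one may take $i=1$, so that $Q_1(\bar y_4)$ sits in degree $2p+3$ and the hypothesis on $N$ guarantees it survives in $H^*(X;\bZ/p)$. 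This already gives $y_4\notin\tilde N^1H^4(X)$: by the computation in the proof of Lemma 4.1, any class in $\tilde N^1H^4$ is a transfer $f_*(\alpha')$ of a degree-two class $\alpha'\in H^2(Y)$, for which $Q_i(\alpha')=-\beta((\alpha')^{p^i})=0$, whence $Q_i(f_*\alpha')=0$; this is incompatible with $Q_i(\bar y_4)\neq0$.

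The remaining and hardest point is to show $\bar y_4\in N^1H^4(X;\bZ/p)$, and then to lift it to $y_4\in N^1H^4(X)$. I emphasize that Lemma 3.5 does \emph{not} apply directly: because $BG$ is simply connected we have $H^3(BG;\bZ/p)=0$, so $\bar y_4$ lies in $\Ker(Q_0)$ (being the reduction of an integral class) but not in $\Img(Q_0)$, and therefore $y_4$ is not a torsion class. Instead I would invoke Deligne's identification (Theorem 3.1) of $N^1$ with divisibility by $\tau$ and exhibit a weight-three lift $z\in H^{4,3}(X;\bZ/p)$ with $\tau z=\bar y_4$; equivalently, I would show that $\bar y_4$ dies in the unramified group $H^0_{Zar}(X,\mathcal H^4_{\bZ/p})$. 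This is precisely the low-degree coniveau (motivic) computation of $H^*(BG)$, carried out for $G=G_2$ in [Ya1] (where it is recorded that $w_4\in N^1$), and I expect this computation to be the \emph{main obstacle} of the proof. Granting it, Lemma 3.2 upgrades the membership to $y_4\in N^1H^4(X)$, once the relevant comparison map $H^{5,3}(X)\to H^{5,4}(X)$ is seen to be injective in this range.

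Finally I would assemble the pieces. For $*<N$ we have $H^4(X)\cong\bZ_{(p)}\{y_4\}\oplus\bZ_{(p)}\{y_{\bP}^2\}$, where $y_{\bP}$ is the $\bP^{\infty}$-class; the summand $\bZ_{(p)}\{y_{\bP}^2\}$ is algebraic and hence lies in $\tilde N^1H^4(X)$, so in $DH^4(X)=N^1H^4(X)/(p,\tilde N^1H^4(X))$ only the line spanned by $y_4$ survives. As $y_4\in N^1$ and $Q_i(\bar y_4)\neq0$, Lemma 4.1 (with $s=4$) gives $DH^4(X)\supset\bZ/p\{y_4\}\neq0$; and since $y_4\notin\tilde N^1$ forces $\tilde N^1H^4(X)\cap\bZ_{(p)}\{y_4\}\subset p\bZ_{(p)}\{y_4\}$ (the only proper subgroups of $\bZ_{(p)}\{y_4\}$ being the $p^k\bZ_{(p)}\{y_4\}$), we in fact obtain $DH^4(X)\cong\bZ/p$. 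The argument is uniform once the two inputs are in hand — the Milnor-operation detection of $p$-torsion and the coniveau-level-one membership of $\bar y_4$ — in exact parallel with the $G_2$ computation above.
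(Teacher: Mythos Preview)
Your overall architecture matches the paper's: reduce to a simple factor, locate the generator $y_4\in H^4(BG;\bZ_{(p)})$, show $Q_1(\bar y_4)\neq0$, show $y_4\in N^1H^4(X)$, and conclude via Lemma~4.1. The detection of $Q_1(\bar y_4)\neq0$ differs in flavor---you use transgression from $H^3(G;\bZ/p)$, while the paper restricts along explicit subgroup embeddings ($G_2\hookrightarrow G$ for $p=2$, $(\bZ/p)^3\hookrightarrow F_4,E_8\hookrightarrow G$ for $p=3,5$) and reads off $Q_1w_4=w_7$ or $Q_1Q_0(x_1x_2x_3)\neq0$---but both are legitimate.

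The genuine gap is exactly the step you flag as the ``main obstacle'': proving $y_4\in N^1H^4(X)$. You propose to produce a weight-three motivic lift and then invoke Lemma~3.2, but you do not carry this out for general $G$, and you also leave the injectivity of $H^{5,3}(X)\to H^{5,4}(X)$ unchecked. The paper sidesteps the motivic computation entirely with a short trick you are missing. Citing Lemma~3.1 of [Ya4], one knows that $p\cdot y_4$ is represented by Chern classes, hence lies in the image of the cycle map and therefore in $N^1H^4(X)$. So there is a nonempty open $U\subset X$ on which $p\cdot y_4=0$; that is, $y_4|_U$ is $p$-torsion in $H^4(U)$. Now Lemma~3.5 (Colliot-Th\'el\`ene--Voisin) applies on $U$ and gives $y_4|_U\in N^1H^4(U)$, so $y_4$ vanishes on a further open $U'\subset U$, i.e.\ $y_4\in N^1H^4(X)$ directly, with no appeal to Lemma~3.2 or to any weight-three lift. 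This two-line argument replaces the motivic analysis you anticipated, and it is the key idea you should add.
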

\begin{proof}
It is only need to prove the theorem when  $G$ is a simple group
having $p$ torsion in $H^*(BG)$.
Let $p=2$.  
It is well known that there is an embedding  $j:G_2\subset G$ such that  (see \cite{Pi-Ya}, \cite{YaR} for details)
\[ H^4(BG)\stackrel{j^*}{\cong} H^4(BG_2)\cong \bZ\{w_4\}.\] 

Let $x=(j^{*})^{-1}w_4\in H^4(BG)$.  
From Lemma 3.1 in \cite{YaR}, we see that $2x$ is represented by Chern classes.
Hence  $2x$ is the image from $CH^*(X)$,  and so 
$2x\in N^1H^4(X)$.  This means there is an open set $U\subset X$ such that
$ 2x=0\in H^*(U)$ that is. $x$ is $2$-torsion in $H^*(U)$. Hence from Lemma 3.5,
we have $x\in N^1H^4(U)$, and so there is $U'\subset U$ such that $x=0\in H^4(U')$.
This implies $x\in N^1H^4(X)$.

Since $j^*(Q_1x)=Q_1w_4=w_7\not=0$, we see $Q_1x\not =0$.
From the main lemma (Lemma 4.1), we see $DH^4(X)\not =0$
for $G$.

For the cases $p=3,5$, we consider the exceptional groups $F_4,E_8$ respectively.  Each simply connected simple group
$G$ contains $F_4$ for $p=3$, $E_8$ for $p=5$. There is $x\in H^4(BG)$ such  that $px$ is a Chern class and $Q_1(x)\not =0\in H^*(BG;\bZ/p).$
In fact,  there is embedding $j:(\bZ/p)^3\subset G$ with $j^*(x)=Q_0(x_1x_2x_3)$. 
Hence we have the theorem.
\end{proof} 
\begin{cor}  Let $X$ be an approximation for $BSpin_n$
with $n\ge 7$ or $BG$ for an exceptional group $G$.  Then $X$ is not stable rational.
\end{cor}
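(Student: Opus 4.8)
The plan is to read off the corollary from Theorem 6.3 combined with the stable birational invariance of $DH^*$ recorded in Section 5 (Proposition 2.4 in [Be-Ot]). First I would verify that every group in the list satisfies the hypotheses of Theorem 6.3, namely that it is simply connected and that $H^*(BG)$ has $p$-torsion for some prime $p$. For $Spin_n$ with $n\ge 7$ this is classical: $Spin_n$ is simply connected, being the universal cover of $SO_n$, and $H^*(BSpin_n;\bZ)$ has $2$-torsion once $n\ge 7$ (for $n\le 6$ one has $Spin_3=Sp_1$, $Spin_5=Sp_2$, $Spin_6=SU_4$, which are torsion-free). One convenient way to see the torsion is through the chain $G_2\subset Spin_7\subset Spin_n$ and the $2$-torsion already visible in $H^*(BG_2)$ used in the proof of Theorem 6.3. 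For an exceptional group $G$ I would cite the well-known presence of torsion in $H^*(BG)$: $2$-torsion for $G_2,F_4,E_6,E_7,E_8$, together with $3$-torsion for $F_4,E_6,E_7,E_8$ and $5$-torsion for $E_8$.

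Once the hypotheses are checked, Theorem 6.3 produces, for a suitable approximation $X=X(N)$ with $N\ge 2p+3$, the nonvanishing $DH^4(X)\not=0$. The remaining step is to convert this into a statement about stable rationality. Here I would use that $DH^*(X;\bZ_{(p)})=N^1H^*/(p,\tilde N^1H^*)$ is a stable birational invariant, and that it vanishes in positive degrees on any stably rational variety: for projective space every positive-degree class lies in the image of the cycle map, hence in $\tilde N^1$ by Lemma 3.3, so $N^1=\tilde N^1$ and $DH^{>0}=0$. Therefore a variety whose $DH^4$ is nonzero cannot be stably birational to projective space, i.e.\ cannot be stably rational.

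Combining the two observations finishes the argument: the same approximation $X$ that realizes $DH^4(X)\not=0$ in Theorem 6.3 cannot be stably rational, for either $BSpin_n$ with $n\ge 7$ or $BG$ with $G$ exceptional. I do not anticipate a real obstacle here; the work is entirely in invoking the classical torsion computations for $BSpin_n$ and the exceptional groups and in noting that the hypotheses of Theorem 6.3 hold uniformly across the list. The only mild point of care is to keep the prime $p$ (and hence the bound $N\ge 2p+3$) matched to the group at hand, for instance $p=2$ for $Spin_n$ and $G_2$, while for $E_8$ any of $p=2,3,5$ will do.
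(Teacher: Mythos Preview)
Your proposal is correct and matches the paper's intended argument: the corollary is placed immediately after Theorem 6.3 precisely because it follows by combining $DH^4(X)\neq 0$ with the stable birational invariance of $DH^*$ from [Be-Ot], exactly as you outline. The paper gives no separate proof, and your verification that $Spin_n$ ($n\ge 7$) and the simply connected exceptional groups satisfy the torsion hypothesis is the only content needed; note also that the paper subsequently remarks that the same corollary can alternatively be read off from the stronger retract-rationality statement Theorem 6.6, but this is an aside rather than a different proof.
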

{\bf Remark.}
Kordonskii \cite{Ko}, Merkurjev (Corollary 5.8 in \cite{Me}),
and Reichstein-Scavia show \cite{Re-Sc}
that  $BSpin_n$ itself is stably rational when $n\le 14$.
 These facts do not mean contradictions.
The (Ekedahl) approximation $X$  is not stable rationally equivalent to 
$GL_M/G=BG$ where $M$ is a large number such that $G$ acts freely on $GL_M$.  On the other hand $X$ is  constructed  from 
a quasi projective variety $BG$ as taking 
 intersections of subspaces of $\bP^{M'}$ for a large $M'$.
(The author thanks Federico Scavia who pointed out this remark.)

At last of this section, we consider the case $G=PGL_p$.
We have (for example Theorem 1.5,1.7 in \cite{Ka-Ya})  
additively \[H^*(BG;\bZ/p)\cong 
M\oplus N \quad with \ \ 
 M\stackrel{add.}{\cong}\bZ/p[x_4,x_6,...,x_{2p}],\quad \]
\[ N= SD\otimes \Lambda(Q_0,Q_1)\{u_2\}\quad with\ \ 
SD=\bZ/p[x_{2p+2},x_{2p^2-2p}]\]
where $x_{2p+2}=Q_1Q_0u_2$ and suffix means its degree.  The Chow ring is given as 
\[ CH^*(BG)/p\cong M\oplus SD\{Q_0Q_1(u_2)\}.\]
From Lemma 4.1, we have ;
\begin{thm}  Let $p$ be odd.
For an approximation $X$ for $BPGL_p$,
we see $ \bZ/p\{Q_0u_2\}\subset DH^*(X)$, and moreover there is a surjection \[  \bZ/p[x_{2p^2-2p}]\{Q_0u_2\}\twoheadrightarrow
DH^*(X)/(Im(cl))\quad for\ *<N\]
for the cycle map $cl:CH^*(X)\to H^{2*}(X))$.
\end{thm}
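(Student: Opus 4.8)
The plan is to sandwich $DH^*(X)$ between a lower bound produced by the main lemma (Lemma 4.1) and an upper bound produced by the Frobenius reciprocity argument, in complete analogy with the proofs of Theorem 5.1 (the case $(B\bZ/p)^3$) and Theorem 6.1 (the case $BSO_n$). Throughout I work in the range $*<N$, where $H^*(X)\cong H^*(BG\times \bP^{\infty})$; the extra $\bP^{\infty}$-factor only contributes an algebraic polynomial generator, which lands in the Chow part and hence in $\tilde N^1$, so it does not affect $DH^*$ (compare the remark that $N^1/\tilde N^1$ is stable under $\times\bP^m$). I also use that the $p$-torsion of $H^*(BG)$ has exponent $p$, so that $H^*(BG)/p\cong \Ker(Q_0)$.

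For the lower bound I would apply Corollary 4.2 directly. The summand $N$ contains $\Lambda(Q_0,Q_1)\{u_2\}$ with $|u_2|=2$, so with $a=u_2$ and $i=1$ Corollary 4.2 gives $\bZ/p\{Q_0u_2\}\subset DH^*(X)$. Unwinding this through Lemma 4.1: the class $Q_0u_2$ has degree $3$ and is $p$-torsion, hence lies in $N^1H^3(X)$ by Lemma 3.5, while $Q_1(Q_0u_2)=Q_1Q_0u_2=x_{2p+2}\neq 0$, so Lemma 4.1 forces $Q_0u_2$ to be nonzero in $DH^3(X)$.

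For the surjection I would compute the source of $DH^*(X)$ explicitly. First, every positive-degree class of $H^*(BG)/p\cong \Ker(Q_0)=M\oplus SD\{Q_0u_2,\,Q_0Q_1u_2\}$ lies in $N^1$: the torsion-free part $M$ consists of Chern classes, whose positive-degree elements are algebraic, and the torsion part $SD\{Q_0u_2,Q_0Q_1u_2\}$ lies in $N^1$ by Lemma 3.5. Next, $\Img(cl)=CH^*(BG)/p=M\oplus SD\{Q_0Q_1u_2\}$, so by Lemma 3.3 together with Frobenius reciprocity (Lemma 3.4) the ideal generated by $M^+$ and by $SD\{Q_0Q_1u_2\}$ lies in $\tilde N^1$. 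The decisive point is that $x_{2p+2}=Q_0Q_1u_2$ is itself a Chow class, whence $x_{2p+2}\cdot SD\{Q_0u_2\}\subset \tilde N^1$. Consequently the image of $N^1H^*(X)$ in $DH^*(X)$ is spanned by the classes $SD\{Q_0u_2\}$ modulo $x_{2p+2}$, that is by $\bZ/p[x_{2p^2-2p}]\{Q_0u_2\}$, which yields the asserted surjection
\[ \bZ/p[x_{2p^2-2p}]\{Q_0u_2\}\twoheadrightarrow DH^*(X). \]

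The main obstacle is the bookkeeping forced by the identification $x_{2p+2}=Q_0Q_1u_2$, which is simultaneously a polynomial generator of $SD$ and a Chow class in the module summand. I must check that multiplication by $x_{2p+2}$ carries $SD\{Q_0u_2\}$ into $\tilde N^1$, whereas $x_{2p^2-2p}$ is genuinely not a Chow class; indeed a degree count shows $x_{2p^2-2p}$ and its powers are not multiples of $x_{2p+2}$ in $SD$ and do not lie in $M$, so they survive as multipliers of $Q_0u_2$. I also have to be sure no further non-Chow torsion classes exist beyond $SD\{Q_0u_2\}$, which follows because $\Img(Q_0)=SD\{Q_0u_2,Q_0Q_1u_2\}$ exhausts the torsion and the second summand is entirely algebraic. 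Note that only the surjection, and not its injectivity, is claimed, so I need not decide whether $x_{2p^2-2p}^kQ_0u_2$ is nonzero in $DH^*(X)$ for $k\ge 1$.
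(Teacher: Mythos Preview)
Your proof is correct and follows the same approach as the paper, which simply says ``From Corollary 4.2, we have'' after recording the description of $H^*(BPGL_p;\bZ/p)$ and $CH^*(BPGL_p)/p$; you have filled in exactly the intended details via Lemma 4.1, Lemma 3.4 and Lemma 3.5, in the same pattern as Theorems 5.1 and 6.1. One small correction to your digression: the claim that $x_{2p^2-2p}$ does not lie in $M$ is not right (for $p=3$ one checks directly that $N^{12}=0$, so $x_{12}\in M\subset CH^*(BG)/p$), but as you yourself observe at the end, this point is irrelevant since only the surjection is asserted.
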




\section{$\bZ/p$-coefficient cohomology for abelian groups }

In the preceding sections, we have seen that cases
$DH^*(X;A)\not =0$ are not so rare 
for $A=\bZ_{(p)}$, $\bZ/p^i$, $i\ge 2$. However currently it seems  difficult to make  
such example for $A=\bZ/p$.   (Recall the remark in the last in  $\S 4$.)
\begin{que}
$DH^*(X;\bZ/p)=0$ for each smooth projective variety $X$ ?
\end{que}

At first, we consider the case $G=(\bZ/p)^3$.
We can take a $quasi$ projective approximation $\bar X(N)$ of $B\bZ/p$ explicitly 
by the quotient (the $N$-dimensional lens space)
\[\bar X(N)=\bC^{N*}/(\bZ/p)\quad where \ \bC^{N*}=(\bC^N-\{0\}).
\]
Next we consider the projective  approximation 
\[X(N)\to \bar X(N)\times \bP^{N}\to B\bZ/p\times \bP^{\infty}.\]
Let us write $X_i$ (resp. $X_i'$) the above $\bar X(N)$ (resp. $\bar X(N-1))$ for a sufficient large number $N$.
Let \[ i_1:Y_1=X_1'\times X_2\times X_3\to
 X=X_1\times X_2\times X_3
.\] 
Similarly we define $Y_2,Y_3$, and the disjoin union $Y=Y_1\sqcup Y_2\sqcup Y_3$.

Recall that for $p:odd$
\[H^*(X;\bZ/p)\cong \bZ/p[y_1,y_2,y_3]/(y_1^{N+1},y_1^{N+1},y_3^{N+1})\otimes
\Lambda(x_1,x_2,x_3),\]
and $H^*(Y_i;\bZ/p)\cong H^*(X;\bZ/p)/(y_i^N)$ for $i=1,2,3$.
For $p=2$, some graded ring $grH^*(X;\bZ/2)$ is isomorphic to the above ring
(in fact $x^2_i=y_i$).

For the embedding $f_i:X_i'\to X_i$, it is known 
$f_{i*}(1)=c_1(N_i)$ where $N_i$ is the normal   bundle for $X_i'\subset X_i$.
Hence the transfer is given by
\[ f_{1*}(1)=y_1,\quad f_{2*}(1)=y_2,\quad f_{3*}(1)=y_3.\]
 Therefore we have for $x=(x_2x_3+ x_3x_1+x_1x_2)\in H^*(Y_1
\sqcup Y_2\sqcup Y_3;\bZ/p)$,
\[ f_*(x)=y_1x_2x_3+y_2x_3x_1+y_3x_1x_2=Q_0(x_1x_2x_3)=\alpha.\]
(Note that the element $x=(x_1x_2+x_2x_3+x_3x_1)$ is not 
in the integral cohomology $H^*(Y)$.)  
Thus we see $\alpha\in \tilde N^cH^*(X;\bZ/p).$
More generally, we see 
\begin{thm}
We have the  approximation $X=X(N)$ for 
$(B\bZ/p)^n$ such that  
\[DH^*(X;\bZ/p)=0 \quad *<N.\]
\end{thm}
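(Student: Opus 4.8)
The plan is as follows. Since we work with $\bZ/p$-coefficients, the class $p$ acts as zero, so $DH^*(X;\bZ/p)=N^1H^*(X;\bZ/p)/\tilde N^1H^*(X;\bZ/p)$, and it suffices to prove the inclusion $N^1H^*(X;\bZ/p)\subseteq \tilde N^1H^*(X;\bZ/p)$. I would use the explicit approximation $X=X_1\times\cdots\times X_n$ built from the lens-space approximations $X_i=\bar X(N)$ of $B\bZ/p$ (the harmless $\bP^N$-factor may be dropped, since $N^*/\tilde N^*$ is unchanged under $\times\bP^m$), together with the closed embeddings $i_i:Y_i=X_1\times\cdots\times X_i'\times\cdots\times X_n\to X$ whose Gysin maps satisfy $i_{i*}(1)=y_i$, exactly as set up for $n=3$ just before the theorem.

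First I would identify $N^1H^*(X;\bZ/p)$ precisely. By Deligne's comparison (Theorem 3.1), $N^1H^*(X;\bZ/p)$ equals the image of multiplication by $\tau$ inside $H^{*,*}(X;\bZ/p)$. Using the known motivic cohomology of these approximations ([Ya1], [Te-Ya]), a $\bZ/p$-basis of $H^{*,*'}(X;\bZ/p)$ is given (for $p$ odd) by the monomials $\tau^k y^a x_S$ with $|y_i|=(2,1)$, $|x_i|=(1,1)$, $|\tau|=(0,1)$, where $y^a=y_1^{a_1}\cdots y_n^{a_n}$ and $x_S=\prod_{i\in S}x_i$. Such a monomial has degree $2|a|+|S|$ and weight $k+|a|+|S|$, so the \'etale classes (those with weight equal to degree) are exactly the $\tau^{|a|}y^ax_S$, and such a class lies in the image of $\times\tau$ if and only if $|a|\ge 1$. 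Hence $N^1H^*(X;\bZ/p)$ is precisely the ideal $(y_1,\dots,y_n)$ generated by the Chern classes, spanned by the monomials $y^ax_S$ with $|a|\ge 1$.

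Next I would push every generator of this ideal into $\tilde N^1$ by Frobenius reciprocity, as in Lemma 3.4. Given a spanning monomial $y^ax_S$ with $a_i\ge 1$, factor it as $y^ax_S=y_i\cdot g$ with $g=y^{a-e_i}x_S$. Since $y_i=i_{i*}(1)$, Frobenius reciprocity gives
\[ y^ax_S=i_{i*}(1)\cdot g=i_{i*}(i_i^*g)\in \mathrm{Im}(i_{i*})\subseteq \tilde N^1H^*(X;\bZ/p). \]
As $\tilde N^1$ is a subgroup and these monomials span $N^1H^*(X;\bZ/p)$, we conclude $N^1H^*(X;\bZ/p)\subseteq \tilde N^1H^*(X;\bZ/p)$, and therefore $DH^*(X;\bZ/p)=0$ for $*<N$.

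I expect the main obstacle to be the upper bound $N^1H^*(X;\bZ/p)\subseteq (y_1,\dots,y_n)$ in the second step: it is the only nonformal input, and it rests on combining Deligne's identification of the coniveau filtration with the $\tau$-filtration and the explicit weight bookkeeping in the motivic cohomology of the approximations. This step also needs extra care when $p=2$, where $x_i^2=y_i$ and $H^*(X;\bZ/2)$ is not a tensor product of a polynomial and an exterior algebra; there one argues on the associated graded ring $\mathrm{gr}\,H^*(X;\bZ/2)$, which is isomorphic to the odd-prime model. Finally, it is worth underscoring the contrast with the integral case: the classes realizing the generators of $N^1$ as transfers, such as $x_2x_3+x_3x_1+x_1x_2$ with $f_*(\cdot)=\alpha=Q_0(x_1x_2x_3)$, are genuinely mod-$p$ and do not lift to the integral cohomology of $Y$, which is exactly the reason $\tilde N^1$ is larger mod $p$ (and why $DH^*(X;\bZ_{(p)})\neq 0$ is instead detected by $Q_1$ through the main lemma, Lemma 4.1).
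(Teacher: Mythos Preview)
Your proposal is correct and follows essentially the same approach as the paper: the paper first proves (as its Corollary immediately preceding the proof) that $H^*(X;\bZ/p)/N^1H^*(X;\bZ/p)\cong \Lambda(x_1,\dots,x_n)$ via the $\tau$-filtration identification of Theorem~3.1, hence $N^1H^*(X;\bZ/p)=\mathrm{Ideal}(y_1,\dots,y_n)$, and then observes that this ideal lies in the image of the transfers from the $Y_i=X_1\times\cdots\times X_i'\times\cdots\times X_n$, exactly as you do with Frobenius reciprocity. Your write-up is in fact more explicit on both steps (the weight bookkeeping and the reciprocity computation) than the paper's own two-line proof.
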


We recall here  the motivic cohomology.
By Voevodsky [Vo 1], $H^{*,*'}(B\bZ/p;\bZ/p)$ satisfies the K$\ddot{u}$nneth formula so that (for $p$ odd) 
\[H^{*,*'}(B(\bZ/p)^n;\bZ/p)\cong \bZ/p[\tau, y_1,...,,y_n]/(y_1^{N+1},...,y_n^{N+1})\otimes
\Lambda(x_1,...,x_n).\]
Here $0\not =\tau\in H^{0,1}(Spec(\bC);\bZ/p)$, and
$deg(y_i)=(2,1)$, $deg(x_i)=(1,1)$.

From Theorem 3.1, we can identify
$N^cH_{et}^*(X;\bZ/p)= F_{\tau}^{*,*-c}$
where \\ 
$F_{\tau}^{*,*-c}=Im(\times \tau^c:H^{*,*-c}(X;\bZ/p)\to H^{*,*}(X;\bZ/p)).$

\begin{cor} (Theorem 5.1 in \cite{Te-Ya})  Let  $X=X(N)$ be  the approximation for  $(B\bZ/p)^n$ for a sufficient large $N$.  Then we have
\[ H^*(X:\bZ/p)/N^1H^*(X;\bZ/p)\cong \Lambda(x_1,....,x_n).\]
\end{cor}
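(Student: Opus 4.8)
The plan is to reduce the statement to a purely motivic computation, using Deligne's comparison (Theorem 3.1) to rewrite $N^1H^*(X;\bZ/p)$ as the image of multiplication by $\tau$. Taking $c=1$ in Theorem 3.1 gives
\[ N^1H^*(X;\bZ/p)=F_{\tau}^{*,*-1}=\Img\bigl(\times\tau\colon H^{*,*-1}(X;\bZ/p)\to H^{*,*}(X;\bZ/p)\bigr), \]
so the quotient I must compute is the cokernel of $\times\tau$ on the diagonal $*=*'$. Since $\tau$ has bidegree $(0,1)$, the degree-$(m,m)$ part of the ideal $(\tau)\subset H^{*,*'}(X;\bZ/p)$ is exactly $\tau\cdot H^{m,m-1}(X;\bZ/p)$. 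Hence, bidegree by bidegree,
\[ H^*(X;\bZ/p)/N^1H^*(X;\bZ/p)\cong \bigl(H^{*,*'}(X;\bZ/p)/(\tau)\bigr)^{\mathrm{diag}}, \]
the part of the quotient ring $H^{*,*'}(X;\bZ/p)/(\tau)$ lying on the diagonal. This converts the problem into reading off the diagonal of an explicit bigraded ring.

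Next I would compute $H^{*,*'}(X;\bZ/p)/(\tau)$ from the motivic K\"unneth formula displayed above: for the approximation $X$ for $(B\bZ/p)^n$ and $*<N$, and $p$ odd,
\[ H^{*,*'}(X;\bZ/p)\cong \bZ/p[\tau,y_1,\dots,y_n]/(y_1^{N+1},\dots,y_n^{N+1})\otimes\Lambda(x_1,\dots,x_n), \]
with $\deg(\tau)=(0,1)$, $\deg(y_i)=(2,1)$, $\deg(x_i)=(1,1)$. Modulo $(\tau)$ a monomial $y^Ix^J$ (with $|I|=\sum i_k$ and $J\subseteq\{1,\dots,n\}$) has bidegree $(2|I|+|J|,\,|I|+|J|)$, which lies on the diagonal exactly when $2|I|+|J|=|I|+|J|$, i.e. $|I|=0$. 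Thus the only diagonal monomials surviving modulo $\tau$ are the exterior monomials $x^J$, and these form a $\bZ/p$-basis of $\Lambda(x_1,\dots,x_n)$; together with the previous paragraph this is the desired additive isomorphism.

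Finally I would upgrade this to a ring isomorphism and treat $p=2$. The coniveau filtration is multiplicative ($N^aH^*\cdot N^bH^*\subset N^{a+b}H^*$), so $N^1$ is an ideal and the quotient is a ring in which every $y_i$ maps to $0$ while the $x_i$ generate and satisfy $x_ix_j=-x_jx_i$; hence the quotient ring is $\Lambda(x_1,\dots,x_n)$. For $p$ odd one already has $x_i^2=0$. The only genuinely delicate point is $p=2$, where Voevodsky's relation reads $x_i^2=\tau y_i$, so that $x_i^2\in(\tau)$ and therefore $x_i^2=0$ in the quotient; this is precisely why one passes to the associated graded ring $grH^*(X;\bZ/2)$ as in $\S7$, after which the same degree count applies. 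I expect this $p=2$ relation, together with the verification that the motivic K\"unneth formula and Theorem 3.1 apply to the projective approximation $X$ itself (and not merely to $(B\bZ/p)^n$) in the range $*<N$, to be the only points requiring care; the bidegree bookkeeping is otherwise routine.
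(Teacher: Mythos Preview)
Your argument is correct and follows essentially the same route as the paper: both invoke Theorem~3.1 to identify $N^1H^*(X;\bZ/p)$ with the image of $\times\tau$, then use Voevodsky's explicit K\"unneth description of $H^{*,*'}((B\bZ/p)^n;\bZ/p)$ and a bidegree count to see that the only diagonal classes not divisible by $\tau$ are the exterior monomials in the $x_i$. The paper's proof is terser---it only states the inclusion $(y_1,\dots,y_n)\subset N^1H^*$ by observing that any element of that ideal has motivic bidegree with first coordinate strictly larger than the second---whereas you spell out the cokernel computation, the ring structure, and the $p=2$ subtlety; but the underlying idea is identical.
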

\begin{proof}
Let $x\in Ideal(y_1,...,y_n)\subset H^{*,*'}(X;\bZ/p)$.  Then $deg(x)=(*,*')$ with
$*>*'$, and $x$ is a multiplying  of $\tau$. Hence $x\in N^1H^*(X;\bZ/p)$.
\end{proof}
\begin{proof}[Proof of Theorem 7.2.]
Let $x\in N^1H^*(X;\bZ/p)$.  From the above corollary,
$x\in Ideal(y_1,...,y_n)$ which is in the image of the transfer
(such as $Y_1=X_{1}'\times X_2\times ...\times X_n$).
That is $x\in \tilde N^1H^*(X;\bZ/p)$. 
\end{proof}

We can extend Lemma 7.3, by using the following lemma.
Let us write by $XG$ an approximation for $BG$.
\begin{lemma}
Let $G$ have a Sylow $p$-subgroup $S$.
If $ DH^*(XS;\bZ/p)=0$, then 
so for $BG$.
\end{lemma}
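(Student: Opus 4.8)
The plan is to exploit the usual Sylow transfer: since $[G:S]$ is prime to $p$, the restriction $H^*(BG;\bZ/p)\to H^*(BS;\bZ/p)$ is split injective, and I want to realize this retract geometrically at the level of the approximations, compatibly with both coniveau filtrations. So the first thing I would do is choose the approximations compatibly. Starting from a representation $V$ of $G$ on which $G$ acts generically freely, the inclusion $S\subset G$ gives a finite \'etale map $U/S\to U/G$ of degree $[G:S]$ on the relevant open loci, and Ekedahl's construction [Ek] produces projective approximations $XS$ for $BS$ and $XG$ for $BG$ together with a finite map $\pi:XS\to XG$ of degree $[G:S]$ inducing restriction $\pi^*$ and the Gysin transfer $\pi_*$ on cohomology in the range $*<N$. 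The projection formula then gives $\pi_*\pi^*=[G:S]$, which is multiplication by a unit in $\bZ/p$.

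Next I would check that both $\pi^*$ and $\pi_*$ preserve each of the two filtrations $N^1$ and $\tilde N^1$ with $\bZ/p$-coefficients. For $\pi_*$ this is formal: proper pushforward preserves $N^1$, and if $\alpha=f_*(\beta)$ with $f:Y\to XS$ proper of codimension $\ge 1$, then $\pi_*\alpha=(\pi f)_*(\beta)\in\tilde N^1H^*(XG;\bZ/p)$. For $\pi^*$: if $\alpha$ dies on $XG-Z$ with $\codim Z\ge 1$, then $\pi^*\alpha$ dies on $XS-\pi^{-1}(Z)$ and $\codim\pi^{-1}(Z)\ge 1$ since $\pi$ is finite, so $\pi^*$ preserves $N^1$; for $\tilde N^1$, base change for the Gysin map along the finite \'etale $\pi$ gives $\pi^*f_*=f'_*\pi'^*$, where $f':XS\times_{XG}Y\to XS$ is again proper of the same codimension with smooth source (as $\pi$ is \'etale), so $\pi^*\alpha\in\tilde N^1H^*(XS;\bZ/p)$.

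The retract argument then finishes it. Take $\alpha\in N^1H^*(XG;\bZ/p)$. Then $\pi^*\alpha\in N^1H^*(XS;\bZ/p)$, which by the hypothesis $DH^*(XS;\bZ/p)=0$ equals $\tilde N^1H^*(XS;\bZ/p)$. Pushing forward, $\pi_*\pi^*\alpha\in\tilde N^1H^*(XG;\bZ/p)$; but $\pi_*\pi^*\alpha=[G:S]\,\alpha$ with $[G:S]$ invertible modulo $p$, so $\alpha\in\tilde N^1H^*(XG;\bZ/p)$. Hence $N^1H^*(XG;\bZ/p)=\tilde N^1H^*(XG;\bZ/p)$ and therefore $DH^*(XG;\bZ/p)=0$ for $*<N$.

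The main obstacle I expect lies in the first step: producing genuinely compatible projective approximations carrying an honest finite map $\pi$ that realizes restriction and transfer in the stated range, and verifying the base change identity $\pi^*f_*=f'_*\pi'^*$ (hence preservation of $\tilde N^1$ by $\pi^*$) with a smooth fiber product. Once that geometric input is secured, the purely formal retract computation of the last paragraph is immediate.
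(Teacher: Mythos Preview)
Your argument is correct and is essentially the same as the paper's: restrict along $\pi^*$ (which preserves $N^1$), invoke the hypothesis to land in $\tilde N^1$ on $XS$, then push forward and use $\pi_*\pi^*=[G:S]$, a unit in $\bZ/p$. The paper simply writes this down in three lines, citing the finite-group transfer $j_*=cor_S^G$ without dwelling on the geometric compatibility of the approximations.

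One remark: of the four filtration-preservation checks you set up, only two are actually used --- $\pi^*$ preserving $N^1$ and $\pi_*$ preserving $\tilde N^1$. In particular the base change identity $\pi^*f_*=f'_*\pi'^*$ (and the smoothness of the fiber product) that you flag as the main obstacle is never invoked in the argument, so you can drop that step entirely. The paper correspondingly omits it.
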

\begin{proof}
Let $j: BS\to BG$ and $i; Y\to XS$.  We consider maps
\[ H^*(Y;\bZ/p)\stackrel{i_*}{\to}
  H^*(XS;\bZ/p)\stackrel{j_*}{\to}
H^*(XG;\bZ/p).\]
Here $j_*=cor_S^G$ is the transfer for finite groups.

Note $j^*N^1H^*(XG;\bZ/p)\subset N^1H^*(XS;\bZ/p)$ by the naturality
of $j^*$.
Hence given $x\in N^1H^*(XG;\bZ/p)$. the element 
$y=j^*(x)$ is in $N^1H^*(XS;\bZ/p)$.

 By the assumption
in this lemma, there is $y'\in H^*(Y;\bZ/p)$ with $i_*(y')=y$.
Thus  we have
$\ j_*i_*(y')=j_*y=j_*j^*(x)=[G;S]x.$ 
\end{proof}

Similarly, we can prove ;
\begin{cor}
Let $G$ have  an abelian $p$ Sylow subgroup, and $X=X(N)$
be an approximation for $BG$.  Then
$ DH^*(X;\bZ/p)=0$ for $  *<N$.\
\end{cor}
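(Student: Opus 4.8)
The plan is to reduce to a Sylow $p$-subgroup and then to extend Theorem 7.2 from elementary abelian groups to arbitrary abelian $p$-groups. First I would apply Lemma 7.4: since $S$ is a Sylow $p$-subgroup of $G$, it suffices to prove $DH^*(XS;\bZ/p)=0$. As $S$ is abelian I may write $S\cong\prod_{i=1}^n\bZ/p^{e_i}$ and take $XS$ to be a product of lens-space approximations $X_i$ for the cyclic factors $B\bZ/p^{e_i}$, exactly as in the elementary abelian case treated in this section.

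Next I would record that, for $p$ odd, $H^*(XS;\bZ/p)\cong\bZ/p[y_1,\dots,y_n]\otimes\Lambda(x_1,\dots,x_n)$ (suitably truncated), with $|x_i|=1$ and $|y_i|=2$, independently of the exponents $e_i$. The only difference from the elementary abelian case is that $Q_0(x_i)=\beta(x_i)=0$ once $e_i\ge2$; this changes the action of the Bockstein but not the additive structure, and in particular not the weights. (For $p=2$ with $e_i\ge2$ one uses the analogous truncated polynomial-times-exterior description and the note that for $p=2$ a similar argument applies.)

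The crux is the analogue of Corollary 7.3, namely $H^*(XS;\bZ/p)/N^1H^*(XS;\bZ/p)\cong\Lambda(x_1,\dots,x_n)$. For this I would argue in motivic cohomology: by the K\"unneth formula $H^{*,*'}(XS;\bZ/p)$ is assembled over $\bZ/p[\tau]$ from the classes $x_i$ in bidegree $(1,1)$ and $y_i=c_1\in CH^1(X_i)$ in bidegree $(2,1)$, the bidegree of $y_i$ being the same for every $e_i$. A monomial $\prod y_i^{a_i}\prod x_j^{\epsilon_j}$ then has topological degree $\sum 2a_i+\sum\epsilon_j$ and weight $\sum a_i+\sum\epsilon_j$, so every element of the ideal $(y_1,\dots,y_n)$ has $*>*'$ and is therefore a multiple of $\tau$; by Theorem 3.1 (Deligne) it lies in $N^1$. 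Conversely the pure exterior monomials live in $H^{*,*}$ and are not divisible by $\tau$. Hence $N^1H^*(XS;\bZ/p)$ is exactly the ideal $(y_1,\dots,y_n)$, with quotient $\Lambda(x_1,\dots,x_n)$.

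Finally I would show $(y_1,\dots,y_n)\subseteq\tilde N^1H^*(XS;\bZ/p)$. Since $y_i\in CH^1(XS)=H^{2,1}$, Lemma 3.3 gives $y_i\in\tilde N^1$, and Frobenius reciprocity (Lemma 3.4) promotes this to $y_i\cdot z\in\tilde N^1$ for all $z$, so the whole ideal is strongly coniveau. Together with the inclusion $\tilde N^1\subseteq N^1$ this yields $N^1H^*(XS;\bZ/p)=\tilde N^1H^*(XS;\bZ/p)$, and hence $DH^*(XS;\bZ/p)=N^1/\tilde N^1=0$ since $p=0$ in $\bZ/p$-coefficients. By Lemma 7.4 this gives $DH^*(X;\bZ/p)=0$ for $*<N$. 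The step I expect to be the main obstacle is the weight-structure input for $e_i\ge2$: one must know that the degree-two polynomial generator of $H^{*,*'}(B\bZ/p^{e};\bZ/p)$ still sits in bidegree $(2,1)$ and that the Beilinson--Lichtenbaum comparison controls $\tau$-divisibility exactly as when $e=1$, with the case $p=2$, $e\ge2$ requiring the separate similar argument.
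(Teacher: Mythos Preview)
Your proposal is correct and follows the route the paper intends. The paper gives no separate proof of this corollary; it is meant to follow immediately from Lemma 7.4 together with (the argument of) Theorem 7.2. Since Theorem 7.2 is stated only for $(\bZ/p)^n$, you are right to spell out why the same argument works for a general abelian $p$-group $S\cong\prod_i\bZ/p^{e_i}$: the additive and bigraded structure of $H^{*,*'}(XS;\bZ/p)$ is identical to the elementary-abelian case, with $x_i\in H^{1,1}$ and $y_i=c_1\in H^{2,1}$ regardless of $e_i$, so Corollary 7.3 and the proof of Theorem 7.2 carry over verbatim.

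The one small methodological difference is in showing $(y_1,\dots,y_n)\subset\tilde N^1$. In the proof of Theorem 7.2 the paper uses the explicit codimension-one inclusions $X_i'\hookrightarrow X_i$ of lens-space approximations, with $f_{i*}(1)=y_i$, and then Frobenius reciprocity. You instead invoke Lemma 3.3 (any class in $H^{2,1}$ lies in $\tilde N^1$) followed by the projection formula. These are the same mechanism viewed at two levels of generality; your version has the minor advantage of not needing the explicit lens-space model. The caveat you flag about the motivic K\"unneth and the bidegree of the degree-two generator for $e_i\ge 2$ is the only point requiring input beyond what the paper writes down explicitly, but it is standard: $y_i$ is a first Chern class, hence in $CH^1=H^{2,1}$, and over $\bC$ the motivic cohomology of (an approximation of) $B\bZ/p^e$ is free over $\bZ/p[\tau]$ on the evident monomials, just as for $e=1$.
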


\section{the groups $Q_8$ and $D_8$}

When $|G|=p^3$, we have the short exact sequence
\[ 0\to C\to G\to V \to 0\]
where $C\cong \bZ/p$ is in the center and $ V\cong \bZ/p\times \bZ/p$. 
Let us take generators such that $C=\la c\ra, V=\la a,b \ra.$
Moreover  we can take $[a,b]=c$ when $G$ is non-abelian.

There are two cases, when $p=2$, the quaternion group $Q_8$
and the dihedral group $D_8$. 
We will show here
\begin{thm} Let $X=X(N)$ be an approximation for $Q_8$
or $D_8$.  Then $DH^*(X;\bZ/2)=0$ for $*<N$.
\end{thm}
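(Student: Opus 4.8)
The plan is to show that for these two groups of order $8$, the invariant $DH^*(X;\bZ/2)$ vanishes, which by Corollary 7.7 is already expected since $Q_8$ and $D_8$ both have $2$ as the relevant prime but are \emph{nonabelian}, so Corollary 7.7 does not directly apply. Instead I would reduce the problem to understanding $N^1H^*(X;\bZ/2)$ via the $\tau$-Bockstein description of Theorem 3.1, and then exhibit every class in $N^1H^*(X;\bZ/2)$ as lying in the strong coniveau filtration $\tilde N^1$. For $G = Q_8$, the key structural fact recorded earlier is that $H^{odd}(BG)=0$; combined with Lemma 3.3 (all of $H^{2*,*}$ lies in $\tilde N^*$) this should make the vanishing nearly immediate, since the integral cohomology is concentrated in even degrees and generated by Chern-type classes, forcing $N^1 = \tilde N^1$ modulo $2$. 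For $G=D_8$, the extra odd generator $e$ of degree $3$ must be handled, and this is where the real work lies.

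First I would write down $H^{*,*'}(X;\bZ/2)$ in the motivic bidegree, using the approximation identification $H^{*,*'}(X;\bZ/2)\cong H^{*,*'}(BG\times\bP^{\infty};\bZ/2)$ for $*<N$, and identify $N^1H^*(X;\bZ/2)=F_\tau^{*,*-1}$ as the image of multiplication by $\tau$. The strategy is then to check that each $\tau$-divisible class is in the image of a transfer $f_*$ from a smooth variety. The natural source of transfers here is the same as in the elementary abelian case (Theorem 7.2): I would use the embeddings of lower-dimensional approximations and the Frobenius reciprocity of Lemma 3.4, together with the fact that the Chern classes / Euler classes generating the even part of $H^*(BG)$ are realized as $f_*(1)$ for appropriate $f$. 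The point is that over $\bZ/2$ coefficients (unlike over $\bZ_{(2)}$), the obstruction class in the main lemma (Lemma 4.1) comes with a $Q_1$ that can itself be absorbed into a transfer, exactly as in the computation $i_*(Q_1\alpha')=Q_1(\alpha)$ carried out for $(\bZ/2)^n$.

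The hard part will be the odd class $e\in H^3(BD_8)$ and its multiples: I need to show $e\in\tilde N^1H^*(X;\bZ/2)$ even though over the integers $e$ generates the nontrivial $DH^*(X)\cong\bZ/2\{e\}$ of Theorem 5.5. The resolution should be that modulo $2$ the class $e$ lifts to a $\tau$-multiple whose $\tau$-preimage can be pushed forward from a smooth subvariety, precisely because the integral torsion obstruction (detected by $Q_1$ landing in a Chern class) disappears when one works with mod-$2$ coefficients and is allowed to quotient by the image of the transfer of $Q_1\text{-preimages}$. Concretely, I would find a class $e'\in H^{2}$ on a suitable $Y$ with $f_*(e')=e$ mod $2$, mirroring the role of $x=(x_2x_3+x_3x_1+x_1x_2)$ in Theorem 7.2, using the restriction of $D_8$ to its maximal elementary abelian subgroups.

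To organize the argument cleanly I would prove the two cases in parallel: for $Q_8$ invoke $H^{odd}(BG)=0$ plus Lemma 3.3 and Corollary 4.2 to get $N^1=\tilde N^1+ (2)$ immediately; for $D_8$ carry out the explicit transfer realization of $e$ as above. The main obstacle I anticipate is verifying that the relevant $Y$ and map $f$ really exist compatibly with the approximation $X=X(N)$ and give the correct pushforward in the motivic bidegree after multiplying by $\tau$; once the single class $e$ is handled, multiplication by the even generators (Chern classes) and Frobenius reciprocity (Lemma 3.4) propagate the conclusion to all of $N^1H^*(X;\bZ/2)$, yielding $DH^*(X;\bZ/2)=0$ for $*<N$.
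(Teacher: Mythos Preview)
Your overall strategy---identify $N^1H^*(X;\bZ/2)$ via Theorem 3.1, quotient by Chern-class ideals using Lemma 3.4, and then handle the residual classes---is the same as the paper's. But there are two genuine problems.

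For $Q_8$, your appeal to ``$H^{odd}(BG)=0$'' conflates integral and mod-$2$ cohomology: $H^*(BQ_8;\bZ/2)$ has odd-degree classes $x_1,x_2,w$, so that fact by itself gives nothing. What actually works (and is what the paper does) is to observe that $H^*(BQ_8;\bZ/2)/(y_1,y_2,c_2)\cong\bZ/2\{1,x_1,x_2\}$, and that $x_i$ has motivic bidegree $(1,1)$, hence is not $\tau$-divisible and not in $N^1$. This is close to your idea in spirit but not in the form you stated it.

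The real gap is in the $D_8$ case. You focus on the integral odd class $e\in H^3(BD_8)$, but over $\bZ/2$ the classes that actually survive the quotient by $(y_1,y_2,c_2)$ and lie in $N^1$ are $x_1u$ and $x_2u$ \emph{separately} (here $u\in H^{2,2}$ is a genuinely mod-$2$ class with $u^2=c_2$ that does not come from integral cohomology). One has $e=x_1u+x_2u$, so producing a transfer hitting $e$ is not enough: you must place each $x_ju$ in $\tilde N^1$ individually. The paper proves this (Lemma 8.2) by a method quite different from your proposed subgroup transfers: it uses the $2$-dimensional representation $D_8\to U_2$, the associated $G$-action on $\bC^{2*}$, and an explicit equivariant transfer $i_*:H_G^*(H;\bZ/2)\to H_G^{*+2}(\bC^{2*};\bZ/2)$ from a union $H$ of one-dimensional $G$-orbits. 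Two auxiliary lemmas (computing $H_G^*(H_0;\bZ/2)$ via a Lyndon--Hochschild--Serre spectral sequence, and showing $i_*$ is injective from the long exact sequence of the pair) pin down $i_*(z_j)=u_j'$ with $\tau u_j'=x_ju$. Your sketch via restriction to maximal elementary abelians does not obviously produce classes hitting $x_1u$ and $x_2u$ separately, and you would need to replace it with this equivariant construction or something equivalent.
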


{\bf 8.1.}  The case $G=Q_8$. Then  $a^2=b^2=c$.
 Its cohomologies are well known (see \cite{Qu2})
\[H^*(BG)/2\cong \bZ/2[y_1,y_2,c_2]/(y_i^2,y_1y_2)\ \ 
|y_i|=2,\]
\[H^*(BG;\bZ/2)\cong \bZ/2[x_1,x_2,c_2]/(x_1x_2+y_1+y_2,
x_1y_2+x_2y_1)\]
\[\cong \bZ/2\{1,x_1,y_1,x_2,y_2,w\}\otimes \bZ/2[c_2]\]
where $ x_i^2=y_i$ $|x_i|=1$, and  $w=y_1x_2=y_2x_1$,
$|w|=3$.

Therefore, we see 
\[H^*(BG;\bZ/2)/(y_1,y_2,c_2)\cong \bZ/2\{1,x_1,x_2\}.\]
Of course $deg(x_i)=(1,1)$ in $H^{*,*'}(BG;\bZ/2)$ and they are
not in $N^1H^*(BG;\bZ/2)$.  Thus we have Theorem 8.1 for 
$G=Q_8$.

{\bf 8.2.}  The case $G=D_8$.  Then  $a^2=c, b^2=1$.
 It is well known
\[ H^*(BG)/2\cong \bZ/2[y_1,y_2,c_2]/(y_1y_2)\{1,e\}\quad with \ |e|=3.\]
  The mod $2$ cohomlogy is written  \cite{Qu2}
\[ H^*(BG;\bZ/2)\cong \bZ/2[x_1,x_2,u]/(x_1x_2)\quad (with\ |u|=2)\]
\[ \cong (\oplus_{j=1}^2 \bZ/2[y_j]\{y_j, x_j,y_ju,x_ju\}\oplus
\bZ/2\{1,u\})\otimes \bZ/2[c_2].\]
Here $y_j=x_j^2, u^2=c_2$ and $Q_0(u)=(x_1+x_2)u=e,
Q_1Q_0(u)=(y_1+y_2)c_2$.

We note $y_1,y_2,c_2\in CH^*(BG)/2$ and
\[ H^*(BG;\bZ/2)/(y_1,y_2,c_2)\cong
(\oplus _{j=1}^2\bZ/2\{x_j,x_ju\})\oplus \bZ/2\{1,u\}.\]
Moreover, $deg(x_j)=(1,1)$, $deg(u)=(2,2)$ in the motivic cohomology \\ 
$H^{*,*'}(BG;\bZ/2)$ and they are not in $N^1
H^*(BG;\bZ/2)$.
Here we note $deg(x_ju)=(3,3)$, but there is $u_j'\in 
H^{3,2}(BG;\bZ/2)$ with $x_ju=\tau u_j'$ from Lemma 6.2 in \cite{YaJ}
( i.e., $x_ju\in N^1H^*(X;\bZ/2)$).

Hence for the proof of Lemma 8.1 (for $G=D_8$),
 it is only need to show 
\begin{lemma} We have $x_iu\in \tilde N^1H^*(BG;\bZ/2)$.
\end{lemma}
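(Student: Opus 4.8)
The plan is to realize each $x_iu$ as a geometric transfer of relative codimension $1$, by writing it as the corestriction from an approximation of a maximal elementary abelian subgroup $E\cong(\bZ/2)^2$, where the class to be pushed forward already lies in $\tilde N^1$ thanks to the $(B\bZ/2)^2$ computation of $\S 7$. Write $E_1=\la c,b\ra$ and $E_2=\la c,ab\ra$ for the two maximal elementary abelian subgroups of $G=D_8$. Each is normal of index $2$, $E_1\cap E_2=Z=\la c\ra$ is the center, and $G/E_i\cong\bZ/2$ acts on $H^*(BE_i;\bZ/2)$ through the conjugation (swap) action. Let $\pi_i\colon XE_i\to X$ be the finite \'etale double cover of approximations induced by $E_i\subset G$, so that on cohomology $(\pi_i)_*=\mathrm{tr}_{E_i}^{G}$.

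First I would record the restriction maps. Writing $H^*(BE_1;\bZ/2)=\bZ/2[v_1,v_2]$ (with $v_j$ dual to the two generators), one gets $\mathrm{res}_{E_1}(x_1)=v_1+v_2$, $\mathrm{res}_{E_1}(x_2)=0$, and $\mathrm{res}_{E_1}(u)=v_1v_2$; the last holds because $u^2=c_2$ restricts to the mod $2$ second Chern class $v_1^2v_2^2$ of the two eigen line bundles of the $2$-dimensional representation. Symmetrically, writing $H^*(BE_2;\bZ/2)=\bZ/2[d,s]$ with $d,s$ dual to $c$ and to the order-$2$ element of $E_2$, one finds $\mathrm{res}_{E_2}(x_1)=0$, $\mathrm{res}_{E_2}(x_2)=s$, $\mathrm{res}_{E_2}(u)=d^2+ds$. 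In particular $\mathrm{res}_{E_2}(x_1u)=0$ and $\mathrm{res}_{E_1}(x_2u)=0$, while $\mathrm{res}_{E_1}(x_1u)=(v_1+v_2)v_1v_2$ and $\mathrm{res}_{E_2}(x_2u)=sd^2+ds^2$.

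The key identifications I would then establish are
\[ x_1u=\mathrm{tr}_{E_1}^{G}(v_1^2v_2),\qquad x_2u=\mathrm{tr}_{E_2}^{G}\bigl(d^2(d+s)\bigr). \]
To verify these I would use the Mackey double coset formula: for $E_i\triangleleft G$ of index $2$ one has $\mathrm{res}_{E_i}\mathrm{tr}_{E_i}^{G}=1+\sigma$ with $\sigma$ the swap, and $\mathrm{res}_{E_j}\mathrm{tr}_{E_i}^{G}=\mathrm{tr}_{Z}^{E_j}\circ\mathrm{res}_{Z}^{E_i}$ for $i\neq j$. An essential small point is that $\mathrm{tr}_{Z}^{E_j}=0$: it is the transfer of a finite \'etale double cover, hence degree-preserving with $\mathrm{tr}_{Z}^{E_j}(1)=[E_j:Z]=0$ in $\bZ/2$, and since $\mathrm{res}_{Z}^{E_j}$ is surjective, Frobenius reciprocity forces $\mathrm{tr}_{Z}^{E_j}\equiv 0$. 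Granting this, a direct check (e.g. $(1+\sigma)(v_1^2v_2)=v_1^2v_2+v_1v_2^2=(v_1+v_2)v_1v_2$, and $(1+\sigma)(d^2(d+s))=sd^2+ds^2$) shows that the two sides of each displayed identity have the same restriction to $E_1$ and to $E_2$; since the total restriction $H^*(BG;\bZ/2)\to H^*(BE_1;\bZ/2)\times H^*(BE_2;\bZ/2)$ is injective (Quillen detection for $D_8$), the identities follow.

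Finally I would assemble the transfer. By the construction of $\S 7$ applied to $BE_1\cong(B\bZ/2)^2$, the ideal generated by the Chern classes $v_1^2,v_2^2$ lies in $\tilde N^1H^*(XE_1;\bZ/2)$; concretely there is a proper codimension-$1$ map $f\colon Y\to XE_1$ with $f_*(1)=v_1^2$, so $v_1^2v_2=f_*(f^*v_2)\in\tilde N^1H^*(XE_1;\bZ/2)$ by Frobenius reciprocity (Lemma 3.4). Hence
\[ x_1u=(\pi_1)_*\bigl(f_*(f^*v_2)\bigr)=(\pi_1\circ f)_*(f^*v_2), \]
and $\pi_1\circ f\colon Y\to X$ is proper of relative codimension $1$ from a smooth variety (the composite of the codimension-$0$ cover $\pi_1$ with the codimension-$1$ map $f$), so $x_1u\in\tilde N^1H^*(X;\bZ/2)$. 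The class $x_2u$ is handled identically through $\pi_2$, using that $d^2(d+s)=d^2\cdot(d+s)$ lies in $\tilde N^1H^*(XE_2;\bZ/2)$ because $d^2$ is a Chern class. I expect the main obstacle to be the two cohomological identities $x_iu=\mathrm{tr}_{E_i}^{G}(\cdots)$: the delicate inputs are the vanishing $\mathrm{tr}_{Z}^{E_j}=0$ and accurate bookkeeping of the swap action and the restriction of $u$, after which the detection isomorphism and the codimension count reduce everything to the $(B\bZ/2)^2$ case.
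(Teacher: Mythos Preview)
Your argument is correct and takes a genuinely different route from the paper. You work purely group-theoretically: you identify $x_iu$ as the corestriction $\mathrm{tr}_{E_i}^{G}$ of an explicit class from a maximal elementary abelian subgroup $E_i\cong(\bZ/2)^2$, verify this via the Mackey formula together with Quillen detection for $D_8$ (using the pleasant vanishing $\mathrm{tr}_Z^{E_j}=0$), and then invoke the $(B\bZ/2)^2$ result of \S7 plus Frobenius reciprocity to see that the class being transferred already lies in $\tilde N^1$. Composing with the finite \'etale double cover $XE_i\to X$ gives the required codimension-$1$ proper map. The paper instead argues geometrically: it realizes $H^*(BG;\bZ/2)/(c_2)$ as the $G$-equivariant cohomology of $\bC^{2*}$ for the $2$-dimensional representation, isolates the codimension-$1$ locus $H\subset\bC^{2*}$ of points with nontrivial stabilizer, computes $H^*_G(H;\bZ/2)$ via a spectral sequence, and then reads off $i_*(z_j)=u_j'$ (i.e.\ $x_ju$) from the long exact sequence of the pair $(\bC^{2*},\bC^{2*}-H)$, using that $G$ acts freely on the complement to force $\delta=0$. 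Your approach is shorter and more combinatorial, leaning on the already-established abelian case; the paper's approach is more self-contained and produces an explicit smooth cycle inside a specific approximation, at the cost of a longer equivariant computation.
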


To prove the above lemma, for a $G-$variety
$H$, we consider the equivariant cohomology 
\[ H_G^*(H;\bZ/p)=H^*(E(N)\times _{G}H;\bZ/p)\]
where $E(N)$ is an (approximation of) contractible free $G$-variety.  Let us write
\[ X_GH=appro. \ of\ E(N)\times_GH \ so\ that\ H_G^*(H;\bZ/p)\cong H^*(X_GH;\bZ/p).\]
For a closed embedding  $i:H\subset K$ of $G$-varieties, we can
define the transfer
\[ i_*: H_G^*(H;\bZ/p)\to H^*_G(K;\bZ/p)
\quad by \  i:X_GH\stackrel{id\times_Gi}{\to} X_GK. \]

Hereafter in this section, let $G=D_8$.
We recall arguments in \cite{YaJ}.
We define the $2$-dimensional representation 
$\tilde c: G\to U_2$ such that 
$\tilde c(a)=diag(i,-i)$
and $\bar c(b)$ is the permutation matrix $(1,2)$. 
By this representation, we identify that
$W=\bC^{2*}=\bC^2-\{0\}$
 is an $G$-variety. Note $G$ acts freely on $W\times \bC^*$
but it does not so on $W=\bC^{2*}.$

 The fixed points set on $W$ under $b$ is
\[  W^{\la b\ra}=\{(x,x)|x\in \bC^*\}=\bC^*\{e'\} ,\quad e'=diag(1,1)\in GL_2(\bC).\]
Similarly $W^{\la bc\ra}=\bC^*\{a^{-1}e'\}.$  
  Take
\[ H_0=\bC^{*}\{e',ae'\},\quad H_1=\bC^{*}\{g^{-1}e',q^{-1}ae'\}\]
where $g\in GL_2(\bC)$ with $g^{-1}bg=ab$ (note $(ab)^2=1$).

Let us write
$ H=H_0\sqcup H_1.$
Then $G$ acts on $H_i$ and acts freely on $\bC^{2*}-H$.
In fact it does not contain
fixed points of non-trivial stabilizer groups.

We consider the transfer for some $G$-set $H$ in 
$\bC^{2*}$, and induced equivariant cohomology
\[ i_*: H^*_G(H;\bZ/2)\to H^*_G(\bC^{2*};\bZ/2).\]
\begin{lemma} We have 
\[H^*_G(H_0;\bZ/2)\cong \bZ/2[y] \otimes \Lambda(x,z)\quad
 with \ y=x^2,\ |x|= |z|=1.\]
\end{lemma}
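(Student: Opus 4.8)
The plan is to recognize $H_0$ as a single induced $G$-space and then reduce the computation to the stabilizer subgroup via the induction (Shapiro) isomorphism for Borel cohomology. First I would identify the stabilizer in $G$ of the line $\bC^* e'=\{(x,x):x\in\bC^*\}$. Since $\tilde c(b)$ is the permutation fixing $(1,1)$ and $\tilde c(c)=\tilde c(a)^2=-I$ sends $(x,x)$ to $(-x,-x)$, both $b$ and $c$ preserve this line, so the stabilizer contains $H=\la b,c\ra\cong(\bZ/2)^2$; as $\tilde c(a)(1,1)=(i,-i)$ spans the distinct line $\bC^*(1,-1)=\bC^* ae'$, the element $a$ moves the line off itself, so the stabilizer is exactly $H$, of index $2$. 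Thus $H_0=\bC^* e'\sqcup\bC^* ae'$ is a single $G$-orbit of lines and $H_0\cong G\times_H\bC^*$, where $H$ acts on $\bC^*=\bC^* e'$ by the restriction of $\tilde c$.

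Next I would pass to the approximations. By construction $X_GH_0=U(N)\times_GH_0=U(N)\times_H\bC^*=X_H\bC^*$, so the induction isomorphism gives
\[ H^*_G(H_0;\bZ/2)\cong H^*_H(\bC^*;\bZ/2),\qquad H=\la b\ra\times\la c\ra. \]
The key point is to read off the $H$-action on $\bC^*$ from the computation above: $b$ fixes every $(x,x)$ and hence acts trivially, while $c$ acts by $x\mapsto -x$, a free $\bZ/2$-action. Because the two factors act independently, the Borel construction splits up to homotopy as
\[ U(N)\times_H\bC^*\simeq B\la b\ra\times(\bC^*/\la c\ra)\simeq \bR P^\infty\times S^1, \]
using that $\la c\ra$ acts freely, so its Borel construction is the honest quotient $\bC^*/\la c\ra\cong\bC^*\simeq S^1$, and that $\la b\ra$ acts trivially, contributing a factor $B\bZ/2=\bR P^\infty$.

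Computing cohomology then yields $H^*_H(\bC^*;\bZ/2)\cong \bZ/2[x]\otimes\Lambda(z)$ with $|x|=|z|=1$, where $x$ is the polynomial generator of $H^*(\bR P^\infty;\bZ/2)$ pulled back from $B\la b\ra$ and $z$ generates $H^1(S^1;\bZ/2)$. Setting $y=x^2$ rewrites $\bZ/2[x]=\bZ/2[y]\otimes\Lambda(x)$, so this is precisely $\bZ/2[y]\otimes\Lambda(x,z)$ with $y=x^2$, as claimed.

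I expect the main obstacle to be the bookkeeping in the first step: correctly pinning down that the stabilizer is exactly $H=\la b,c\ra$ and, above all, that on the fixed line $b$ acts trivially whereas $c$ acts freely, since the whole splitting—and hence the appearance of a polynomial generator $x$ together with an exterior generator $z$—hinges on this distinction. The reduction to honest homotopy types is harmless despite working with the Ekedahl approximation, because the identity $X_GH_0=X_H\bC^*$ holds on the nose, so no colimit subtlety enters beyond the standard identification of $H^*_H(-;\bZ/2)$ with the cohomology of its approximation in the range $*<N$.
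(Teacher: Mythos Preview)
Your argument is correct but takes a genuinely different route from the paper. The paper runs the Lyndon--Hochschild--Serre spectral sequence for the extension $0\to\langle a\rangle\to G\to\langle b\rangle\to 0$: since $\langle a\rangle\cong\bZ/4$ acts freely on $H_0$ with quotient $H_0/\langle a\rangle\cong\bC^*$, one gets $H^*_{\langle a\rangle}(H_0;\bZ/2)\cong\Lambda(z)$, the $\langle b\rangle$-action on these coefficients is trivial, and the spectral sequence collapses to $H^*(B\langle b\rangle;\bZ/2)\otimes\Lambda(z)$. You instead identify the stabilizer of the line $\bC^*e'$ as $H=\langle b,c\rangle\cong(\bZ/2)^2$, recognize $H_0\cong G\times_H\bC^*$, and apply the Shapiro isomorphism $H^*_G(H_0;\bZ/2)\cong H^*_H(\bC^*;\bZ/2)$; the product splitting $H=\langle b\rangle\times\langle c\rangle$ with $b$ acting trivially and $c$ freely then gives the Borel space as $B\langle b\rangle\times(\bC^*/\langle c\rangle)$ directly. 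Your approach avoids any spectral sequence and makes the origin of the exterior class $z$ (from $S^1$) and the polynomial class $x$ (from $B\bZ/2$) completely transparent; the paper's approach, while slightly less direct, fits naturally into the framework of the surrounding computations, which repeatedly exploit the cyclic normal subgroup $\langle a\rangle$.
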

\begin{proof}
We consider the group extension $0\to \la a
\ra \to G\to \la b \ra \to 0$ and the induced spectral sequence
\[E_2^{*,*'}=H^*(B\la b\ra; H^{*'}_{\la a\ra }(H_0;\bZ/2))\Longrightarrow
H_G^*(H_0;\bZ/2).\]

Since $\la a\ra\cong \bZ/4$ acts freely on $H_0$, we see
\[H_0/\la a\ra \cong \bC^{*}\{e',ae'\}/\la a\ra
 \cong \bC^{*}/\la a\ra \cong \bC^{*}.\]
Therefore we have 
\[H^*_{\la a\ra}(H_0;\bZ/2)\cong H^*(\bC^*/\la a\ra;\bZ/2)
\cong H^*(\bC^*;\bZ/2)\cong \Lambda (z)\quad |z|=1.\]
Since $\la b\ra$ acts trivially on $\Lambda(z)$ we have
$ H^*_G(H;\bZ/2)\cong H^*(B\la b\ra;\bZ/2)\otimes \Lambda(z).$
\end{proof}

Note $H_G^*(H_0;\bZ/2)\cong H_G^*(H_1;\bZ/2)$ and hence we 
see
\[H^*_G(H;\bZ/2)\cong  \oplus_{j=1}^2 \bZ/2[y_j]\{1_j,y_j, x_j,x_jz_j,z_j\}.\]

We consider the long exact sequence
\[ (*)\quad ...\to H^{*}_G(\{0\};\bZ/2)\stackrel{i_*=c_2}{\to}
H^{*+4}_G(\bC^2;\bZ/2) \to H^{*+4}_G(\bC^{2*};\bZ/2)
\to ...\]
and we have  $H^*_G(\bC^{2*};\bZ/2)\cong H^*(BG;\bZ/2)/(c_2)$. 
Hence,   we get 
\[H^*_G(\bC^{2*};\bZ/2)\cong ( \oplus_{j=1}^2 \bZ/2[y_j]\{y_j,x_j,x_ju_j',u_j'\})\oplus
\bZ/2\{1,u\}\]

Now we consider the transfer 
$H^*_G(H;\bZ/2) \stackrel{i_*}{\to} H^{*+2}_G(\bC^{2*};\bZ/2)$. 
We have explicitly (page 527 in \cite{YaJ})
\[ i_*(1_j)=y_j,\ i_*(x_j)=y_jx_j,\ i_*(x_jz_j)=x_ju_j',\ i_*(z_j)=u_j'.\]  
Therefore we have Theorem 8.1 for $G=D_8$.

To see the above fact we recall 
 the long exact
sequence for $i:H \subset \bC^{2*}$
\[ (**)\quad ...\to H^{*+1}_G(\bC^{2*}-H;\bZ/2)\stackrel{\delta}{\to}
H^*_G(H;\bZ/2) \stackrel{i_*}{\to} H^{*+2}_G(\bC^{2*};\bZ/2)\]
\[
\stackrel{j^*}{\to} H^{*+2}_G(\bC^{2*}-H;\bZ/2)\to...\]
The transfer $i_*$ is determined by the following lemma.
\begin{lemma}  In the above $(**)$, we see 
$\delta=0$, and hence $i_*$ is injective.
\end{lemma}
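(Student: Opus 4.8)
The plan is to deduce the vanishing of $\delta$ from the injectivity of $i_*$: by exactness of $(**)$ one has $\Ker(i_*)=\Img(\delta)$, so proving $i_*$ injective is equivalent to $\delta=0$, which is the assertion of the lemma. First I would record that $i\colon H\hookrightarrow \bC^{2*}$ is a closed embedding of smooth $G$-varieties of complex codimension one, each component of $H$ being a copy of $\bC^*$. Passing to the Borel constructions $X_GH\hookrightarrow X_G\bC^{2*}$, this is again a closed embedding whose normal bundle is the Borel construction of the equivariant normal bundle $\nu$ of $H$ in $\bC^{2*}$, so the self-intersection (projection) formula applies:
\[ i^*i_*(\xi)=e(\nu)\cup\xi,\qquad \xi\in H^*_G(H;\bZ/2), \]
where $e(\nu)\in H^2_G(H;\bZ/2)$ is the mod $2$ Euler class, i.e.\ the top Stiefel--Whitney class of the underlying real rank two bundle. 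If $e(\nu)$ is a non-zero-divisor in $H^*_G(H;\bZ/2)$, then $i^*i_*$ and a fortiori $i_*$ is injective.

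The heart of the matter is the computation of $e(\nu)$ on each component of $H=H_0\sqcup H_1$. Take $H_0$, the fixed locus of the reflection $b$ (and of its conjugate $bc$), recalling that $c=a^2$ acts as $-\mathrm{id}$. Splitting $\bC^2=\bC(1,1)\oplus\bC(1,-1)$ into the $(\pm1)$-eigenspaces of $b$, the tangent line along $H_0$ is $\bC(1,1)$ and the normal line is $\bC(1,-1)$, on which both $b$ and $c$ act by $-1$. By Lemma 8.3, $H^*_G(H_0;\bZ/2)\cong \bZ/2[y]\otimes\Lambda(x,z)$ with $y=x^2$, where $x$ is pulled back from $B\langle b\rangle$ and $z$ comes from the free quotient of the base $\bC^*$ by $\langle c\rangle$. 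The normal line carries the character $\rho$ with $\rho(b)=\rho(c)=-1$, whose underlying real line bundle has first Stiefel--Whitney class $t_b+t_c$; restricting to $H^*_G(H_0;\bZ/2)$ via $t_b\mapsto x$ and $t_c\mapsto z$ gives
\[ e(\nu)|_{H_0}=(t_b+t_c)^2\longmapsto (x+z)^2=x^2=y, \]
since the cross term vanishes in characteristic two and $z^2=0$. The identical computation for the conjugate class of reflections yields $e(\nu)|_{H_1}=x_2^2$.

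Now multiplication by $y=x^2$ is injective on the free $\bZ/2[x]$-module $\bZ/2[x]\otimes\Lambda(z)=H^*_G(H_0;\bZ/2)$, and likewise on $H^*_G(H_1;\bZ/2)$; hence $e(\nu)$ is a non-zero-divisor on $H^*_G(H;\bZ/2)=H^*_G(H_0;\bZ/2)\oplus H^*_G(H_1;\bZ/2)$. Therefore $i^*i_*$ is injective, so $i_*$ is injective and $\delta=0$, as claimed. As a consistency check, the vanishing is forced in high degrees for free reasons: $G$ acts freely on $U=\bC^{2*}-H$, so $H^*_G(U;\bZ/2)\cong H^*(U/G;\bZ/2)$ vanishes above degree four and exactness already gives $i_*$ injective there; the Euler-class argument is what extends this to all degrees simultaneously.

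The hard part will be the Euler-class computation itself, namely correctly identifying the equivariant normal representation of each reflection and verifying that its mod $2$ Euler class equals the square $y=x^2$ of a polynomial generator, rather than some class annihilated by the exterior generator $z$. This rests entirely on the ring structure of Lemma 8.3 (the relations $y=x^2$ and $z^2=0$): it is precisely the fact that the normal line is acted on nontrivially by the generator $b$ with polynomial (rather than free) contribution that makes $e(\nu)$ a non-zero-divisor. One must also take care that the self-intersection formula is applied to the Borel constructions $X_GH$ and $X_G\bC^{2*}$ and not to the non-compact spaces directly, but this is automatic once the embedding is recognized as $G$-equivariant.
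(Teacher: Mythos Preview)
Your argument is correct but takes a genuinely different route from the paper. The paper does not compute the Euler class at all. Its proof is the observation you relegate to a ``consistency check'': since $G$ acts freely on $\bC^{2*}-H$, one has $H^*_G(\bC^{2*}-H;\bZ/2)\cong H^*((\bC^{2*}-H)/G;\bZ/2)$, and this vanishes for $*>4$ because the quotient is a $4$-manifold. Exactness of $(**)$ then forces $\delta=0$ and $i_*$ injective in degrees $>4$; in particular $i_*(y_j^2 z_j)=y_j^2 u_j'$. Since $H^*_G(H;\bZ/2)$ is free over $\bZ/2[y_j]$, this single high-degree value forces $i_*(z_j)=u_j'$ and more generally determines $i_*$ injectively in all degrees.

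Your approach via the self-intersection formula $i^*i_*(\xi)=e(\nu)\cup\xi$ and the identification $e(\nu)|_{H_0}=(t_b+t_c)^2\mapsto x^2=y$ is sound: the stabilizer of the component $\bC^*\{e'\}$ is $\langle b,c\rangle$, the normal line is the pullback of the character $\rho$ with $\rho(b)=\rho(c)=-1$, and under $t_b\mapsto x$, $t_c\mapsto z$ one indeed gets $y$ because $z^2=0$ in $\Lambda(z)$. This is more geometric and yields the explicit value of $i^*i_*$, at the cost of having to track the equivariant normal representation through the two-component set $H_0$ (your exposition tacitly works on one component with its stabilizer, which is fine but could be said explicitly). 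The paper's argument is shorter and needs only the dimension of the free quotient together with the $\bZ/2[y_j]$-module structure already recorded; it trades the Euler-class computation for a bootstrap from high degrees.
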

\begin{proof}
Since $G$ acts freely on $\bC^{2*}-H$, we have
\[ H^*_G(\bC^{2*}-H:\bZ/2)\cong H^*((\bC^{2*}-H)/G;\bZ/2),\]
which is zero when $*>4=2dim((\bC^{2*}-H)/G)$.   Hence $\delta$ must be zero 
for $*>4$, and $i_*$ is injective for $*>4$.  In particular, $i_*(y^2_jz_j)=y_j^2u_j'$.  Since  $H^*_G(H;\bZ/2)$ is 
$\bZ/2[y_1]$-free (or $Z/2[y_2]$-free,) we see
$i_*(z_j)=u_j'$.
\end{proof}

\end{document}